\newcommand{\titsb}[1]{\ensuremath{\partial_T #1}}
\newcommand{\titsm}[2]{\ensuremath{d_T(#1,#2)}}
\newcommand{\tangle}[2]{\ensuremath{\angle(#1,#2)}}
\newcommand{\bndry}[1]{\ensuremath{#1\left(\infty\right)}}
\newcommand{\coneb}[1]{\ensuremath{\partial_\infty #1}}
\newcommand{\cat}[1]{\ensuremath{\textnormal{CAT}(#1)}}
\newcommand{\catz}{\ensuremath{\textnormal{CAT}(0)}}
\newcommand{\R}{\ensuremath{\mathbf{R}}}
\newcommand{\N}{\ensuremath{\mathbf{N}}}
\newcommand{\isom}[1]{\ensuremath{\textnormal{Isom}\left(#1\right)}}
\newcommand{\ball}[2]{\ensuremath{\textnormal{B}\left(#1,#2\right)}}
\newcommand{\buse}[3]{\ensuremath{B_{#1,#2}\left(#3\right)}}
\newcommand{\disthaus}[2]{\ensuremath{d_{\mathcal{H}}\left(#1,#2\right)}}
\newcommand{\contoverconst}{\ensuremath{\mathrm{C}\left(X,\R\right)/\R}}
\newcommand{\can}[1]{\ensuremath{#1^{\prime}}}
\newcommand{\para}[1]{\ensuremath{\mathcal{P}\left(#1\right)}}
\newcommand{\ultra}[1]{T^{#1}}
\newcommand{\distinf}[2]{\ensuremath{\mathrm{d}_{\infty}(#1,#2)}}
\renewcommand{\SS}{{SS}}
\theoremstyle{definition}
\newtheorem{defi}{Definition}
\newtheorem{prop}{Proposition}
\newtheorem{lemma}{Lemma}
\newtheorem{thm}{Theorem}
\newtheorem{cor}{Corollary}
\newtheorem{rmk}{Remark}
\newtheorem{ex}{Example}
\newtheorem{quest}{Question}
\newcounter{deux}\setcounter{deux}{2}
\newcounter{un}\setcounter{un}{1}
\begin{document}
\title{Tits compact CAT(0) spaces}
\author[A.~Bosch\'e]{Aur\'elien Bosch\'e}
\address{Institut Fourier\\ 
100 rue des maths\\
38402 St Martin d'Hères\\
FRANCE}
\email{aurelb@ujf-grenoble.fr }
\begin{abstract}
A standard result in \catz\ geometry states that a cocompact \catz\ space has a discrete Tits Topology (or equivalently the Tits metric is the only metric on the boundary that takes only the two values $0$ and $+\infty$) if and only if it is hyperbolic in the sense of Gromov. So in the category of cocompact \catz\ spaces, having the coarsest possible Tits topology (indeed a Hausdorff topology is always finer than the discrete topology) is equivalent to being Gromov hyperbolic. In this paper we study the opposite situation, i.e. the case where the Tits topology is the finest possible, that is to say when it coincides with the cone topology of Eberlein (remember the Tits topology is always coarser that the cone topology), or equivalently when it is compact. We show some results that apply in this general setting and that imply under some mild assumptions (like the existence of a hyperbolic isometry or the existence of a cocompact parabolic group) that it splits with a (non trivial) euclidean factor. Using recent results of P.E. Caprace and N. Monod, we then obtain that cocompact geodesically complete \catz\ spaces with compact Tits boundary are flat. We strongly suspect that if the space is not geodesically complete, then it is still ``as flat as it can be'', meaning that its canonical boundary minimal subspace is flat (or equivalently its Tits boundary is a metric unit sphere). We show that this is the case if the space admits a cocompact action by semi-simple isometries with locally finite stabilisers. In the final section we investigate the existence of hyperbolic isometries for cocompact \catz\ spaces, and show there existence when the space contains no convex flat planes.
\end{abstract}
\maketitle
\setcounter{section}{-1}
\section{Initial remark}
 The author observed just after completing this article that the so called $\pi$-convergence property (i.e. lemma $18$ of \cite{papasoglu2009boundaries}) is valid for non proper actions (the proof does not make use of this assumption). This would give other proofs for the lemma \ref{lem:split} and the proposition \ref{prop:fixflat}. More importantly it is possible that for the same reason the results (or some results) of \cite{guralnik2011atransversal} hold for cocompact non proper actions too. If so, this would supersed the propositions \ref{prop:geodcompflat} and \ref{prop:semisimlocalfinflat} below.

\section{Introduction}
A metric space $(X,d)$ is called \catz\ if it is geodesic and all its geodesic triangles are not fatter than there euclidean comparison triangle. For general reference on \catz\ spaces, see \cite{BridsonHaefliger201011} and \cite{Ballmann200402}. A geodesic is an isometry $\varphi:I\rightarrow X$ where $I$ is an interval of \R. If $I=\R^+$ (resp. $I=\R$) then $\varphi$ is called a geodesic ray (resp. geodesic line). We emphasis that a \catz\ space might admit no ray (take any bounded convex subset of $\R^n$, any ball of any arbitrary \catz\ space, ...). The Hausdorff distance between two subsets $A$ and $B$ of a metric space $X$ is defined by (for $Y\subset X$ and $r>0$, \ball{Y}{r} designates $\cup_{y\in Y} B(y,r)$  where the balls are those of $X$):
\begin{equation*}
  \disthaus{A}{B}=\inf_{r>0}\left\{B\subset\ball{A}{r}\textrm{ and }A\subset\ball{B}{r}\right\}.
\end{equation*}
We call two rays $\varphi$ and $\psi$ asymptotic if there images are at bounded Hausdorff distance. This is equivalent to saying that the function $t\mapsto d(\varphi(t),\psi(t))$ is uniformly bounded. Being asymptotic is an equivalent relation. The quotient set of all geodesic rays by this equivalent relation is written \bndry{X}, and the class of a ray $\varphi$ is $\varphi(\infty)$. If $X$ is complete (i.e. if every Cauchy sequence converges), one can find a representative of any class of asymptotic rays radiating from any given point $x\in X$, and this representative is necessarily unique. Since we will only be interested in proper spaces (i.e. spaces whose closed balls are compact), we remark that such spaces are complete. Suppose $x\in X$ is now fixed, and give the set of rays emanating from $x$ the topology of uniform convergence on compact sets. This gives a topology on \bndry{X}\ and one can show that this topology is independent of $x$. We write \coneb{X}\ for this topological space and call it the boundary at infinity (endowed with the cone topology if we want to be specific).

There is another description of the boundary in terms of horofunctions. Let us define $\Theta: X\rightarrow \contoverconst; x\mapsto [y\mapsto d(y,x)]$ where \contoverconst\ is the quotient of continuous functions on $X$ quotiented by the constants with the quotient topology induced from the compact open topology on the set of continuous functions. Then \bndry{X} can be canonically identified with the boundary of $\Theta(X)$ in \contoverconst. Indeed if $\varphi$ is a ray in $X$ then the following function is well defined and $1$-lipschitz on $X$:
\begin{equation*}
  \label{eq:busedef}
  x\mapsto\lim_{t\rightarrow+\infty}d(x,\varphi(t))-t.
\end{equation*}
 We write \buse{[\varphi]}{\varphi(0)}{x} for this limit and call the associated function of $x$ the Busemann function based at $[\varphi]$ with reference point $\varphi(0)$. The class in \contoverconst\ of this limit only depends on the asymptotic class $[\varphi]$ of the ray $\varphi$. When $X$ has non positive curvature, all the elements of the boundary of $\Theta(X)$ in \contoverconst\ can thus be obtained. In general we obtain two different compactifications (see \cite{paeng2007tits} for example for the case of manifolds without conjugate points).
 
 The horosphere based at $p\in\bndry{X}$ through $x\in X$ is the level set through $x$ of any Busemann function based at $p$, i.e. it is $b^{-1}(b(x))$ for any such Busemann function. A (closed) horoball based at $p$ is any set of the form $b^{-1}(]-\infty,a])$ where $a\in\R$ and $b$ is a Busemann function based at $p$.

\coneb{X} is compact and second countable (it is readily seen that if $X$ is complete not a priori proper, if \coneb{X} is compact and second countable and if all geodesics in $X$ can be extended to geodesic rays, then  $X$ is itself must be proper). From now on, all the \catz\ spaces we will encounter will be proper (this is equivalent to being locally compact for geodesic spaces). Choose $x\in X$ and let $\varphi$ and $\psi$ be two rays emanating from $x$. Then $2\cdot\arcsin(d(\varphi(t),\psi(t))/2t)$ converges to a real as $t$ goes to $+\infty$. Moreover this real depends only on the asymptotic classes of $\varphi$ and $\psi$, and not on $x$. We write $\tangle{\varphi(\infty)}{\psi(\infty)}$ for this limit. This defines a metric on $\bndry{X}$ and we shall refer to it as the Tits angle. The topology induced by  this metric is always finer that the cone topology, that is to say the identity $i:\bndry{X}\rightarrow\bndry{X}$ induces a continuous bijection $\bar{i}:(X,\angle)\rightarrow\coneb{X}$. We define the Tits metric, and write $\titsm{\cdot}{\cdot}$, to be the inner metric associated to the Tits angle. Hence this metric is defined by:
\begin{displaymath}
  \forall \varphi,\psi\in\bndry{X},\ \titsm{\varphi}{\psi}=\inf_{\gamma}\ell(\gamma),
\end{displaymath}
where $\gamma:[0,1]\rightarrow\bndry{X}$ ranges  over all the continuous path in $\bndry{X}$ (for the topology induced by the Tits angle) that connect $\varphi$ and $\psi$, and $\ell(\gamma)$ is the length of $\gamma$ with respect to the Tits angle defined by:
\begin{displaymath}
  \ell(\gamma)=\sup_{n,\ (t_i)_{1\leq 0\leq n}} \sum_{i=0}^{n-1}\angle(\gamma(t_i),\gamma(t_{i+1})),
\end{displaymath}
where the supremum runs over all $n>0$ and all $0=t_0<t_1<\cdot<t_n=1$. One can show that the balls of radius $r<\pi$ are the same for the Tits angle and the Tits metric (Part \Roman{deux} proposition $9.21$  of \cite{BridsonHaefliger201011}), so that they generate the same topology. We will refer to this topology as the Tits topology, and abuse the vocabulary and call it the Tits topology of $X$.

 Let us briefly remind the classification of isometries in \catz\ spaces. An isometry of a \catz\ space $X$ is called elliptic if it has a fixed point, hyperbolic if it translates some geodesic, and parabolic if it is neither elliptic nor hyperbolic. Isometries naturally act on the boundary \bndry{X} and it can be shown that parabolic isometries fix at least one point in it (we make use of properness here). Non-parabolic isometries are called semi-simple. A geodesic translated by a hyperbolic isometry is called an axis of this isometry. Moreover if $Y\subset X$ is a convex subset stabilised by an isometry $\phi$, then $\phi$ and its restriction to $Y$ have same type. Now let $\phi$ be an isometry fixing a point $p$ at infinity: if $b$ is a Busemann function based at $p$ and $\phi^{*}b$ is the action of $\phi$ on $b$ then $b-\phi^{*}b$ is a constant that only depends of the class of $b$ (i.e. on $p$). This constant is called the Busemann character of $\phi$ at $p$.

We say that $X$ is Tits compact if the Tits topology is compact. For example, it is well known that $\R^n$ with the usual flat metric has a Tits boundary isometric to the standard metric sphere of radius one, and so is Tits compact. More trivially, bounded convex subsets of any \catz\ space (remark that there always exist such subspaces since the balls of \catz\ spaces are convex) have empty boundary, and hence are Tits compact. The following example shows that there exist non flat \catz\ spaces with the geodesic extension property (i.e. such that every geodesic is part of a geodesic line) that are Tits compact: 
\begin{ex}\label{ex:conesphere}
  Take $r\ge 1$ and $n\in\N^*$. Let $\mathcal{S}_r$ be the metric sphere of radius $r$ and dimension $n-1$ and construct the euclidean cone $X$ over $\mathcal{S}_r$ (see section $5$ Part \Roman{un} of \cite{BridsonHaefliger201011} for the definition and general properties of cones over metric spaces). Since the curvature of $\mathcal{S}_r$ is bounded above by $1$ and $\mathcal{S}_r$ is simply connected, $\mathcal{S}_r$ is metrically a $\cat{1}$ space. Then, according to Berestovskii' s theorem (see Part \Roman{deux} proposition $3.14$ in \cite{BridsonHaefliger201011}), $X$ is a \catz\ space. It is easy to see that it is locally compact, has the geodesic extension property, and that its Tits boundary is isometric to $\mathcal{S}_r$. To prove the last affirmation, take $y,z$ on the boundary $\mathcal{S}_r$ of $X$. Then the rays asymptotic to $y$ and $z$ are respectively $t\mapsto (y,t)$ and $t\mapsto (z,t)$, and
  \begin{align*}
    \forall t>0,\ d_X((y,t),(z,t))/2t&=\frac{1}{\sqrt{2}}\sqrt{1-\cos\min\left(d_{\mathcal{S}_r}(y,z),\pi\right)}\\
    &=\frac{1}{\sqrt{2}}\sqrt{2\sin^2\bigg(\min\left(d_{\mathcal{S}_r}(y,z),\pi\right)/2\bigg)}\\
    &=\sin\bigg(\min\left(d_{\mathcal{S}_r}(y,z),\pi\right)/2\bigg),
  \end{align*}
so that $\titsm{y}{z}=d_{\mathcal{S}_r}(y,z)$. In particular $X$ is Tits compact. Let us now turn our attention to the isometries of $X$. Since the cone construction is functorial, every isometry of $\mathcal{S}_r$ extends to an isometry of $X$. This isometry is always elliptic since it fixes the apex of the cone (which we shall call $0$ henceforth). Reciprocally, it is easy to see that the isometries fixing $0$ are uniquely induced by isometries of the Tits boundary $\mathcal{S}_r$. If $r=1$, $X$ is the Hilbert space of dimension $n$, so that there are plenty of other isometries, including translations (Clifford translations in the terminology of \catz\ spaces) in any direction (remark that the actions at infinity of all those translations are trivial, so one cannot hope to recover any of them from its continuation to \titsb{X}). Let us now turn to the case where $r$ is bigger than $1$. Then the inspection of the geodesics in $X$ shows that $0$ is the only point where geodesics branch. So isometries of $X$ must fix $0$. The correspondence between isometries of the Tits boundary of $X$ (naturally isometric to the initial $\mathcal{S}_{r}^{n-1}$) and isometries of $X$ is then easily proved to be a isomorphism of topological groups. So the group of isometries of $X$ is a whole lie group of positive dimension. However, it is compact, so can be considered relatively small and with rather trivial dynamics. 
\end{ex}
\begin{defi}
  A subspace $Y$ of a \catz\ space $X$ is geodesic if for all points $x,y\in Y$, the geodesic in $X$ joining those two points lies inside $Y$.
\end{defi}
\begin{defi}
  A (half) flat plane in $X$ is a convex subset isometric to a euclidean (half) plane.   A (half) flat strip in $X$ is a convex subset isometric to a euclidean (half) strip, i.e. to $[0,a]\times\R$ ($[0,a]\times\R^+$) for some $a>0$.
\end{defi}
The next lemma is  classical and can be found in every textbook on non positively curved spaces like \cite{BridsonHaefliger201011} or \cite{Ballmann200402} and is nicknamed the flat strip theorem:
\begin{lemma}
  Let $X$ be a \catz\ space a let $\ell$ be a geodesic line in $X$ (i.e. $\ell$ is a subset of $X$ isometric to the real line). We call another line $l$ parallel to $\ell$ if they lie at bounded Hausdorff distance from one another. Then if $\ell$ and $l$ are parallel they bound a totally geodesic flat strip in $X$ (i.e. there exists an isometry $\varphi:\R\times[0,a]\rightarrow X$ such that $\varphi(\R\times{0})=\ell$ and $\varphi(\R\times{a})=l$ where $a>0$). Moreover the set of parallel lines to $\ell$ form a totally geodesic subspace that decompose has a product $\R\times Y$ where $Y$ is a Hadamard space and the first factor corresponds to the parallel lines in $X$ to $\ell$ (i.e. the parallel lines to $\ell$ are exactly the $\R\times\{a\}$ where $a\in Y$).
\end{lemma}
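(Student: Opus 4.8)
The plan is to run the classical argument (Part~II, 2.13--2.14 of \cite{BridsonHaefliger201011}), whose only substantial ingredient is the equality (rigidity) case of the \catz\ comparison inequality; throughout I take $X$ complete, as the statement ``$Y$ is a Hadamard space'' presupposes. First I would parametrise $\ell$ and $l$ by arc length as $\ell=c(\R)$ and $l=c'(\R)$, normalising $c'$ so that $c'(0)$ is the point of $l$ nearest $c(0)$, and set $a=d(c(0),c'(0))$. Since the distance to the convex set $l$ is convex along $c$ and bounded (because $\disthaus{\ell}{l}<\infty$), and a bounded convex function on $\R$ is constant, we get $d(c(t),l)\equiv a$; if $a=0$ then $\ell\subseteq l$, hence $\ell=l$ and there is nothing to prove, so I assume $a>0$. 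The same argument makes $t\mapsto d(c'(t),\ell)$ constant, and comparing this constant with $a$ through the nearest-point maps shows it also equals $a$ and that $c(0)$ is likewise the point of $\ell$ nearest $c'(0)$. I then let $c'(\alpha(t))$ denote the point of $l$ nearest $c(t)$, so that $\alpha\colon\R\to\R$ is continuous with $\alpha(0)=0$.

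The hard part is showing $\alpha(t)=\pm t$. Here I would fix $s<t$ and look at the geodesic quadrilateral with vertices $c(s),c(t),c'(\alpha(t)),c'(\alpha(s))$ in that cyclic order: two of its sides are segments of $\ell$ and of $l$, and the other two have length exactly $a$ by the normalisation. The obtuse-angle property of nearest-point projection onto a convex set in a \catz\ space forces all four interior angles to be $\ge\pi/2$, while cutting the quadrilateral along a diagonal and applying \catz\ comparison to the two resulting triangles (the Alexandrov-angle triangle inequality at the diagonal endpoints, together with angle sum $\le\pi$ for \catz\ triangles) bounds the sum of the four angles by $2\pi$. Hence every angle equals $\pi/2$ and the comparison is an equality, so the quadrilateral bounds a flat convex region, i.e.\ a Euclidean rectangle with one pair of opposite sides of length $a$; the other pair of opposite sides is then equal, giving $t-s=|\alpha(t)-\alpha(s)|$, and continuity with $\alpha(0)=0$ forces $\alpha(t)=\pm t$. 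Reorienting $l$, I may assume $\alpha=\mathrm{id}$.

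Now I define $\varphi\colon\R\times[0,a]\to X$ by sending $\{t\}\times[0,a]$ isometrically onto the segment $[c(t),c'(t)]$, so that $\varphi(\R\times\{0\})=\ell$ and $\varphi(\R\times\{a\})=l$. For $t_1\le t_2$ the flat rectangle produced above contains each intermediate segment $[c(t),c'(t)]$, so it identifies $\varphi|_{[t_1,t_2]\times[0,a]}$ with an isometry onto a convex Euclidean rectangle in $X$; since any two points of the domain lie in a common such sub-rectangle, $\varphi$ is a globally isometric embedding with convex image. This is the totally geodesic flat strip bounded by $\ell$ and $l$.

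For the last assertion I would set $P\subseteq X$ to be the union of the images of all geodesic lines parallel to $\ell$. Parallelism among such lines is transitive (triangle inequality for the Hausdorff distance), so any two of them bound a flat strip inside $X$ by the first part; such a strip is itself a union of lines parallel to $\ell$ and is convex in $X$, hence lies in $P$, so $P$ is convex, and being closed in $X$ it is a Hadamard space. I would then conclude with the product decomposition criterion for \catz\ spaces (Part~II, 2.14 of \cite{BridsonHaefliger201011}): it yields an isometry $P\cong\R\times Y$ with $Y$ Hadamard under which the fibres $\R\times\{y\}$ are exactly the lines parallel to $\ell$. I expect the rigidity step of the second paragraph to be the real work, since it is what actually manufactures flatness; the invocation of the product decomposition theorem can, if one prefers, be re-derived by the same angle-chasing, and the remaining points (transitivity of parallelism, closedness of $P$, convexity of the image of $\varphi$) are routine.
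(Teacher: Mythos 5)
The paper gives no proof of this lemma: it is stated as the classical flat strip theorem and deferred to the textbooks \cite{BridsonHaefliger201011} and \cite{Ballmann200402}. Your argument is a correct rendition of exactly that standard proof --- the rigidity step in your second paragraph is the Flat Quadrilateral Theorem (Part~II, 2.11 of \cite{BridsonHaefliger201011}), and the rest assembles it into II.2.13 (flat strip) and II.2.14 (decomposition of the parallel set), so there is nothing here that diverges from what the paper is implicitly invoking.

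Two small points you wave at but should be aware of. First, your quadrilateral argument tacitly assumes $\alpha(s)\neq\alpha(t)$; the degenerate case is excluded by the same angle count applied to the triangle $c(s)c(t)c'(\alpha(s))$, whose two base angles are each $\geq\pi/2$ by the projection property while $d(c(s),c(t))=t-s>0$, contradicting the flat triangle lemma --- this is also what gives injectivity, hence monotonicity, of $\alpha$ before you conclude $\alpha(t)=\pm t$. Second, closedness of the parallel set $P$ is where completeness (and, most easily, properness, which the paper assumes throughout) enters: a sequence $x_n\to x$ with $x_n$ on lines $m_n$ parallel to $\ell$ has $d_{\mathcal H}(m_n,\ell)=d(x_n,\ell)$ bounded, so Arzel\`a--Ascoli produces a limiting line through $x$ still parallel to $\ell$. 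Neither point affects the validity of your proof.
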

 We say that a group $G$ acts cocompactly on a space $X$ if there exists a compact subset $K$ of $X$ whose translates by $G$ cover $X$. In the same context we say that $G$ acts properly if for every compact subset $C$ of $X$ the number of elements $g$ in $G$ satisfying $g\cdot C\cap C\neq\emptyset$ is finite (we shall in time modify this definition). Since the spaces we will be concerned with will be proper, this  is equivalent to the various usual other definitions of properness (for discrete groups). Finally an action is geometric if it is both cocompact and proper.

\section{Minimal subspaces}

We remind results that can be found in \cite{caprace2009isometry}. The reader should refer to this paper for details  and proofs.
\begin{defi}
  Let $X$ be a \catz\ space and $Y\subset X$ a convex subset. We say that $Y$ has full boundary (in $X$) if $\bndry{Y}=\bndry{X}$. Moreover, we say that $Y$ is boundary minimal if it is closed, convex, has full boundary, and is minimal for this property (the partial order considered is the inclusion).
\end{defi}
The existence of boundary minimal subset is not clear. However, we have:
\begin{prop}
  If a \catz\ space $X$ is cocompact, then it admits boundary minimal subsets and all those subspaces are isometric (they are even parallel but we will not need it). Moreover, one can canonically assign to $X$ a subspace $\can{X}$ that is boundary minimal and fixed by all isometries of $X$. This space is called the canonical boundary minimal subspace of $X$.
\end{prop}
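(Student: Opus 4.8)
My plan is to manufacture boundary minimal subsets with Zorn's lemma, the engine being a coboundedness lemma; cocompactness would be used only there. The heart of the matter is the claim that there is a constant $R<\infty$ such that $X=\ball{C}{R}$ for every closed convex $C\subseteq X$ with $\bndry{C}=\bndry{X}$. If $\bndry{X}=\emptyset$ then $X$ is bounded (a proper \catz\ space with no geodesic ray is bounded, by Arzel\`a--Ascoli) and the claim is trivial, so suppose $\bndry{X}\neq\emptyset$. I would fix $o\in X$ and a compact $K$ with $\isom{X}\cdot K=X$; since isometries carry closed convex sets of full boundary to closed convex sets of full boundary, it suffices to bound $d(o,C)$ over all such $C$. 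Were this false, I would take $C_n$ with $d(o,\pi_{C_n}(o))\to\infty$, move each projection point $\pi_{C_n}(o)$ into $K$ by an isometry $h_n$, and pass to a subsequence along which $h_nC_n$ converges (pointed-Hausdorff) to a closed convex $C_\infty$ through a point $p_\infty\in K$, while the geodesics from $h_n\pi_{C_n}(o)$ to $h_no$---of length tending to infinity and based in $K$---converge to a ray $\gamma$ with $\gamma(0)=p_\infty$ and $\gamma(\infty)=:\xi\in\bndry{X}$. Feeding the orthogonality of the projection onto $h_nC_n$ (the \catz\ inequality $d(a,q)^2\ge d(a,\pi_{h_nC_n}(a))^2+d(\pi_{h_nC_n}(a),q)^2$ with $a=h_no$ and $q\in h_nC_n$) into the limit along $\gamma$ shows that the Busemann function at $\xi$ normalised to vanish at $p_\infty$ is nonnegative on $C_\infty$; hence $C_\infty$ lies in a horoball complement of $\xi$, so $\xi\notin\bndry{C_\infty}$. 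But $\bndry{C_\infty}=\bndry{X}$: for $\zeta\in\bndry{X}$ the ray from $h_n\pi_{C_n}(o)$ to $\zeta$ lies inside $h_nC_n$ (it is an ambient ray), and as the basepoints converge to $p_\infty$ these rays converge to the ambient ray from $p_\infty$ to $\zeta$, which thus lies in $C_\infty$. That contradiction proves the claim; the one genuinely delicate point is this stability of ``having full boundary'' under the limit.

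Granting the lemma, existence is routine. I would order the nonempty closed convex subsets of $X$ of full boundary by reverse inclusion ($X$ itself being one of them). For a chain $(Y_i)$, the set $Y=\bigcap_iY_i$ is closed and convex; it is nonempty because the compacta $\overline{\ball{o}{R}}\cap Y_i$ are nonempty (by the lemma) with the finite intersection property; and $\bndry{Y}=\bndry{X}$ because for $p\in Y$ and $\zeta\in\bndry{X}$ the unique ambient ray from $p$ to $\zeta$---being the unique ray to $\zeta$ inside each $Y_i$---lies in all the $Y_i$, hence in $Y$. Zorn's lemma then yields a boundary minimal subset. Two boundary minimal subsets that meet must coincide, since their intersection is closed, convex, of full boundary (same ray argument), and minimality applies; if $Y_1$ and $Y_2$ are disjoint and boundary minimal, the nearest-point projections $\pi_{Y_2}\colon Y_1\to Y_2$ and $\pi_{Y_1}\colon Y_2\to Y_1$ have convex images that are coarsely dense (by the lemma), hence of full boundary, hence everything, so by the flat strip / sandwich lemma they are mutually inverse isometries and $Y_1,Y_2$ are parallel; in particular all boundary minimal subsets are isometric.

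For the canonical subspace I would argue as follows. Fix a boundary minimal $Y_0$; by the foregoing, the union of all boundary minimal subsets is precisely the parallel set $\para{Y_0}$, which is closed, convex, and invariant under $\isom{X}$ (isometries permute boundary minimal subsets). By the splitting theorem for parallel sets---of which the flat strip theorem quoted above is the one-dimensional case, see \cite{caprace2009isometry}---there is an isometry $\para{Y_0}\cong Y_0\times W$ with $W$ a complete \catz\ space, under which the boundary minimal subsets are exactly the slices $Y_0\times\{w\}$, and two slices lie at Hausdorff distance equal to the distance of the corresponding points of $W$. Since $\para{Y_0}$ is convex in $X$ we have $\bndry{(\para{Y_0})}\subseteq\bndry{X}=\bndry{Y_0}$, whereas $\bndry{(Y_0\times W)}=\bndry{Y_0}\ast\bndry{W}$; hence $\bndry{W}=\emptyset$, so $W$ is bounded. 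Thus the set of boundary minimal subsets, with the Hausdorff metric, is a bounded complete \catz\ space isometric to $W$; I would let $\can{X}$ be its unique circumcentre, regarded as a subset of $X$. It is then a boundary minimal subset, it is canonical because the circumcentre is, and it is fixed by $\isom{X}$ since the latter acts on that space by isometries. (When $\bndry{X}=\emptyset$ one simply takes $\can{X}$ to be the circumcentre of the bounded space $X$.)

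The coboundedness lemma is, I expect, the only real obstacle: it is the sole place cocompactness is needed, and within it the subtle step is the persistence of full boundary in the limit $C_\infty$. Everything else---Zorn's lemma, the parallel-set splitting, circumcentres of bounded complete \catz\ spaces---is standard.
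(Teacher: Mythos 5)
The paper gives no proof of this proposition --- it is quoted from \cite{caprace2009isometry} --- and your argument is essentially the one given there: a uniform coboundedness lemma for closed convex subsets with full boundary, Zorn's lemma on chains via the finite intersection property inside $\overline{\ball{o}{R}}$, parallelism of minimal subsets via the sandwich lemma, and the circumcentre of the bounded factor $W$ in the splitting $\para{Y_0}\cong Y_0\times W$ (with $\bndry{W}=\emptyset$ forcing $W$ bounded by properness). Your coboundedness argument is correct, including the delicate persistence of full boundary in the limit $C_\infty$, which works exactly as you say because each $h_nC_n$ is again closed, convex and of full boundary, so the rays from $h_n\pi_{C_n}(o)$ to any fixed $\zeta$ lie in $h_nC_n$ and converge. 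One small point there: the projection inequality must be applied at the intermediate points of the geodesic from $\pi_{h_nC_n}(h_no)$ to $h_no$ (these have the same projection point onto $h_nC_n$), not only at the endpoint $h_no$, in order to get $d(\gamma(t),q)\geq\sqrt{t^2+d(p_\infty,q)^2}$ for each fixed $t$ and hence $b_\xi\geq 0$ on $C_\infty$.

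The one step that does not hold as written is the claim that $\pi_{Y_2}(Y_1)$ is convex: the nearest-point projection of a convex set onto another convex set is in general not convex (project a segment in the Euclidean plane onto a disjoint closed disc; the image is a circular arc). The standard repair --- and the argument of \cite{caprace2009isometry} --- is to consider $f=d(\cdot,Y_2)$ restricted to $Y_1$. It is convex, $1$-Lipschitz, and bounded above by the coboundedness constant, hence non-increasing along every geodesic ray in $Y_1$; therefore each sublevel set $\{\,f\leq\inf f+\epsilon\,\}$ is a nonempty closed convex subset of $Y_1$ with full boundary, so it equals $Y_1$ by minimality, and $f$ is constant. Together with the symmetric statement this places you in the hypotheses of the sandwich lemma of \cite{BridsonHaefliger201011}, which gives that $Y_1$ and $Y_2$ are parallel and that the mutual projections are inverse isometries. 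With that substitution the rest of your proof (the identification of the boundary-minimal subsets with the slices $Y_0\times\{w\}$, and the canonicity and isometry-invariance of the circumcentre slice) goes through.
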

\begin{rmk}
  \can{X} can be defined in greater generality but we shall not use it.
\end{rmk}
The product of two \catz\ spaces $X_1$ and $X_2$ is again a \catz\ space. The Tits boundary of the product depends only on the Tits boundary of each factor and can be constructed explicitly. The construction is called joining (we slightly modify the usual definition since with the usual definition of \cite{BridsonHaefliger201011} one should take the inner metric associated to the joining in order to get the Tits metric of the product). The join of two spaces $Y_1$ and $Y_2$ is written $Y_1\star Y_2$. Hence, almost by definition we have $\titsb{\left(X_1\times X_2\right)}=\titsb{X_2}\star\titsb{X_1}$. Reciprocally if the boundary of $X$ is a join and if $X$ is geodesically complete then $X$ is a product. Without assuming geodesic completness, we have:
\begin{prop}\label{prop:bndrysphere}
  Let $X$ be a cocompact \catz\ space. Then \can{X} is flat if and only if \titsb{X} is a metric unit sphere.
\end{prop}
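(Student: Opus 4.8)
The implication ``$\can{X}$ flat $\Rightarrow\titsb{X}$ a metric unit sphere'' is immediate: a flat $\can{X}$ is, being closed and convex in the proper (hence complete) space $X$, isometric to some $\R^{n}$, and $\titsb{\R^{n}}$ is the unit sphere $S^{n-1}$ (Example~\ref{ex:conesphere} with $r=1$); since $\can{X}$ has full boundary, $\titsb{X}=\titsb{\can{X}}\cong S^{n-1}$. So the content lies in the converse, which I would prove as follows.

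First I would reduce to the case $X=\can{X}$. The nearest point projection $X\to\can{X}$ is $1$-Lipschitz and $\isom{X}$-equivariant (as $\can{X}$ is $\isom{X}$-invariant), so it carries a compact subset with full $\isom{X}$-orbit onto a compact subset of $\can{X}$ whose orbit is all of $\can{X}$; thus $\can{X}$ is cocompact. Moreover $\can{X}$ is boundary minimal \emph{in itself}: a proper closed convex subset of $\can{X}$ with full boundary would be such a subset of $X$, contradicting minimality. So it suffices to prove: \emph{if $Y$ is a cocompact \catz\ space, boundary minimal in itself, with $\titsb{Y}$ isometric to the round unit sphere $S^{n-1}$, then $Y\cong\R^{n}$}; I would do this by induction on $n$. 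For $n=0$, $\titsb{Y}=\emptyset$, so $Y$ is bounded (an unbounded closed convex subset of a proper \catz\ space contains a ray), hence has a circumcentre, and minimality forces $Y$ to be a point $=\R^{0}$.

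For $n\ge 1$: since $\titsb{Y}\neq\emptyset$ there is a geodesic ray in $Y$, and cocompactness then produces a geodesic line $\ell$ (translate the ray so that an interior point stays in a fixed compact set, and take an Arzel\`a--Ascoli limit of the resulting segments, whose lengths tend to $\infty$ on both sides of that point). Let $\xi,\eta$ be the endpoints of $\ell$. Then $\titsm{\xi}{\eta}=\pi$, so $\xi$ and $\eta$ are antipodal in the round sphere $\titsb{Y}$ and hence $\titsm{\xi}{\zeta}+\titsm{\zeta}{\eta}=\pi$ for \emph{every} $\zeta\in\titsb{Y}$. Now I would invoke the suspension property of parallel sets: \emph{if $\xi,\eta$ are the endpoints of a geodesic line $\ell$ and $\titsm{\xi}{\zeta}+\titsm{\zeta}{\eta}=\pi$, then $\zeta$ lies in the boundary of the parallel set $P_{\ell}$ of $\ell$}. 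Granting this, $\titsb{P_{\ell}}=\titsb{Y}$, whence $P_{\ell}=Y$ by minimality, and by the flat strip theorem $Y=P_{\ell}\cong\R\times Y'$; then the spherical join decomposition $S^{n-1}=\titsb{Y}=\titsb{\R}\star\titsb{Y'}$ forces $\titsb{Y'}$ to be the round sphere $S^{n-2}$ (concretely, it is the set of points of $S^{n-1}$ at Tits distance $\pi/2$ from both $\xi$ and $\eta$). Finally $Y'$ is again cocompact (a direct factor of a cocompact \catz\ space is cocompact) and boundary minimal in itself (if $C'\subsetneq Y'$ were closed, convex, with full boundary, then $\R\times C'$ would contradict minimality of $Y$), so by induction $Y'\cong\R^{n-1}$ and therefore $Y\cong\R^{n}$.

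The step that is not formal is the suspension property, and this is where the round-sphere hypothesis is genuinely used. I would prove it with Busemann functions: normalising $b_{\xi},b_{\eta}$ so that $b_{\xi}(\ell(s))=-s$ and $b_{\eta}(\ell(s))=s$, the triangle inequality applied to the pair $\ell(s),\ell(-s)$ gives $b_{\xi}+b_{\eta}\ge 0$ everywhere, with equality exactly on $P_{\ell}$. This sum is convex and vanishes at $\ell(0)$, hence along the ray $\rho$ from $\ell(0)$ to $\zeta$ one has $(b_{\xi}+b_{\eta})(\rho(t))\ge 0$ with $(b_{\xi}+b_{\eta})(\rho(0))=0$; its asymptotic slope along $\rho$ equals $-\cos\titsm{\xi}{\zeta}-\cos\titsm{\eta}{\zeta}=0$ (using the standard fact that $b_{\xi}(\rho(t))/t\to-\cos\titsm{\xi}{\zeta}$, via the first variation formula and the convergence $\angle_{\rho(t)}(\xi,\zeta)\to\titsm{\xi}{\zeta}$ of Alexandrov angles to the Tits angle); convexity then gives, for each fixed $t$, $(b_{\xi}+b_{\eta})(\rho(t))\le(t/\tau)\,(b_{\xi}+b_{\eta})(\rho(\tau))$ for all $\tau>t$, which tends to $0$ as $\tau\to\infty$, so $b_{\xi}+b_{\eta}\equiv 0$ along $\rho$; thus $\rho\subset P_{\ell}$ and $\zeta\in\titsb{P_{\ell}}$. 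Alternatively this suspension property can simply be quoted from \cite{BridsonHaefliger201011} (or from the Caprace--Monod structure theory), and it is the point I expect to be the main obstacle. Note, by contrast, that the superficially similar assertion ``$\titsm{\xi}{\eta}=\pi$ implies $\xi,\eta$ bound a geodesic line'' is \emph{false} for general complete \catz\ spaces, so cocompactness is really needed already to produce the line $\ell$.
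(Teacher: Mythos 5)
Your argument is correct, and while it follows the same overall strategy as the paper --- produce a geodesic line $\ell$ by cocompactness, use roundness of the sphere to see that every $\zeta$ satisfies $\tangle{\xi}{\zeta}+\tangle{\zeta}{\eta}=\pi$ for the endpoints $\xi,\eta$ of $\ell$, conclude that the parallel set $\para{\ell}$ has full boundary, and apply minimality to split off a line factor --- it implements the crucial step by a different lemma. The paper turns the angle identity into a half flat plane with edge $\ell$ via the monotone convergence $\angle_{\ell(t)}(\xi,\zeta)\nearrow\tangle{\xi}{\zeta}$ and the flat sector criterion (equality of the Alexandrov angle at a point with the angle at infinity forces a flat sector, and the two sectors at $\xi$ and $\eta$ glue into a half-plane); you instead use the characterization of $\para{\ell}$ as the zero set of the convex function $b_\xi+b_\eta\ge 0$ together with the asymptotic slope formula $b_\xi(\rho(t))/t\to-\cos\tangle{\xi}{\zeta}$. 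Your ``suspension property'' proof is sound; the one point worth making explicit is that the slope formula involves the Tits \emph{angle} rather than the length metric, but these coincide here since $\pi=\tangle{\xi}{\eta}\le\tangle{\xi}{\zeta}+\tangle{\zeta}{\eta}\le\titsm{\xi}{\zeta}+\titsm{\zeta}{\eta}=\pi$. Structurally, the paper avoids your induction on the dimension of the sphere by splitting off the maximal Euclidean de Rham factor once and deriving a contradiction with its maximality, whereas you peel off one $\R$ factor at a time and must then verify that the cross-section $Y'$ is again cocompact and boundary minimal in itself; both routes ultimately lean on the same uniqueness/invariance of the Euclidean factor under isometries, so neither is more elementary, but the paper's is shorter while yours isolates more transparently where the round-sphere hypothesis enters.
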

\begin{proof}
Since \can{X} has full boundary in $X$, the sufficient part is trivial. We now suppose that \titsb{X} is a metric unit sphere of dimension $n$.

  We can assume that $X$ is itself boundary minimal. Decompose $X$ as a product $\R^k\times Y$ where $Y$ is a \catz\ space that admits no euclidean factor. This decomposition exists, is unique, and every isometry of $X$ respects the splitting so that $Y$ is also cocompact. It must also be boundary minimal since so is $X$. Suppose that $X$ is not flat. Then $Y$ is not compact by minimality of $X$.  Moreover $\titsb{Y}$ is a unit sphere (it is the set of points at constant distance $\pi/2$ from the subsphere \titsb{\R^k} of \titsb{X}).

 Since it is also cocompact it admits a geodesic line $\varphi$ (a non compact proper geodesic space always admits a ray. It is then easy to obtain a line by cocompactness and properness of $X$). Let $x$ be a point on $\varphi$. Now $\varphi(+\infty)$ must be the opposite point of $\varphi(-\infty)$ in the metric sphere \titsb{Y} since it is the only point that lies at distance not lower than $\pi$ from $\varphi(-\infty)$. We shall consider the two endpoints of $\varphi$ as the poles of the sphere. If the boundary has only two points, then $Y=\varphi(\R)$ by minimality and $Y$ is flat, contradicting the hypothesis. Pick a point $y\in\bndry{Y}$ on the equator, and let $\psi$ be the ray from $x$ to $y$. We hence have $\angle(\varphi(\pm\infty),y)=\pi/2$ and:
  \begin{align*}
    \pi=\angle_{x}\left(\varphi(+\infty),\varphi(-\infty)\right)&\leq \angle_{x}\left(\varphi(+\infty),y\right)+\angle_{x}\left(y,\varphi(-\infty)\right)\\
    &\leq \angle\left(\varphi(+\infty),y\right)+\angle\left(y,\varphi(-\infty)\right)=\pi,
  \end{align*}
So that we have equality everywhere. In particular we have
\begin{displaymath}
\angle_{x}\left(\varphi(\pm\infty),y\right)=\angle\left(\varphi(\pm\infty),y\right)=\pi/2.
\end{displaymath}
 From this we deduce that the rays $\varphi(\R^+)$ and $\psi$ (resp. $\varphi(\R^-)$ and $\psi$) span a convex flat quadrant. In fact it is easy to see that those two quadrants form a convex half flat plane with edge $\varphi$ admitting $y$ as boundary point (maybe the easiest way to see this is to remark that we chose $x$ arbitrarily). Hence the set \para{\varphi} of parallel lines to $\varphi$ (which is a closed convex subset of $X$) has full boundary so that $Y=\para{\varphi}$ by minimality. But we know that \para{\varphi} splits as $\R\times Z$ where $Z$ is a \catz\ space. This contradicts the definition of $Y$ and $X$ must be flat.
\end{proof}
\begin{quest}
  Are there non flat boundary minimal \catz\ spaces with Tits boundary isometric to a metric sphere (without assuming geodesic completness of course)?
\end{quest}
\section{Geometric group actions and Tits compactness}\label{sec:geomact}
We only include this section for the curiosity of the reader since we will prove a stronger result in section \ref{sec:geomactbetter}.

 The reference for this section is \cite{guralnik2011atransversal} and the reader should refer to it for proofs and details. We will also use the notations and definitions of this article. The definition $4.2$ in this paper does not always make sense since the Tits metric could be infinite. However we will only make use of the Theorem $3.22$ (the folding lemma) so this will not be a problem for us. Recall theorem $3.22$ of this article:
\begin{thm}[Folding lemma]\label{thm:gural}
   Suppose $G$ acts geometrically on a \catz\ space $X$ and let $d$ denote the geometric dimension of $\titsb{X}$. Then for every $(d+1)$-flat $F_0\subseteq X$ there exist $\omega\in\beta G$ and a $(d+1)$-flat $F\subseteq X$ such that $\ultra{\omega}$ maps all of $\bndry{X}$ onto $\SS=\bndry{F}$, with $\bndry{F_0}$ mapped isometrically onto $\SS$.
\end{thm}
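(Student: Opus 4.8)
The plan is to realise $\ultra{\omega}$ as an ultralimit of group elements that "push $F_0$ out to infinity", reading off $F$ as the limiting flat. Fix a basepoint $p\in F_0$ and, by cocompactness, a compact $K\ni p$ with $GK=X$; put $D=\mathrm{diam}(K)$. Choose a geodesic ray $\sigma\subseteq F_0$ from $p$ in a \emph{suitably regular direction} $\xi\in\bndry{F_0}$ — in a product such as $\Hyp^2\times\Hyp^2$ one must push along a direction translating \emph{every} factor, i.e. $\xi$ should not lie in the boundary of a proper sub-flat of $F_0$, and making this choice precise is part of the work. For each $n$ pick $g_n\in G$ with $g_n^{-1}\sigma(n)\in K$, so the $(d+1)$-flats $g_n^{-1}F_0$ all meet $\ball{p}{D}$. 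As $\sigma(n)\to\infty$ the $g_n$, hence (properness) the $g_n^{-1}$, leave every compact set; passing to a subsequence we may assume $g_n^{-1}\sigma(n)$ converges in $K$, $g_np\to\xi$ and $g_n^{-1}p\to\eta$ in \coneb{X}, and (properness again) $g_n^{-1}F_0\to F$, a $(d+1)$-flat meeting $\ball{p}{D}$. Let $\omega\in\beta G$ refine the tails $\{g_n:n\ge N\}$, and take $\ultra{\omega}$ to be the associated limit map on \coneb{X}, arranged so that its restriction to $\bndry{F_0}$ is $\omega\text{-}\lim_n$ of the boundary maps of the isometries $F_0\to g_n^{-1}F_0$ (this matches, up to replacing $\omega$ by its inverse, the convention of \cite{guralnik2011atransversal}).

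The "easy" half is that $\ultra{\omega}$ carries $\bndry{F_0}$ isometrically onto $\bndry F$. Each $g_n^{-1}$ maps $F_0$ isometrically onto $g_n^{-1}F_0$; since these flats stay within $D$ of $p$ and converge to $F$, the induced boundary maps converge to an isometry $\bndry{F_0}\to\bndry F$, and as boundaries of flats are round spheres this cone‑convergence is Tits‑convergence, so the limit is exactly $\ultra{\omega}|_{\bndry{F_0}}$. Hence $\SS:=\bndry F$ is a round $d$‑sphere and $\SS\subseteq\ultra{\omega}(\bndry{X})$. The same computation identifies $\eta$ as the ideal endpoint of the $g_n^{-1}$‑images of the ray in $F_0$ opposite to $\sigma$, so $\eta\in\bndry F$.

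Next I would show $\ultra{\omega}(\bndry{X})$ lies in the closed Tits $\pi/2$‑neighbourhood of $\bndry F$. If $\tangle{\zeta}{\xi}>\pi/2$, the $\pi$‑convergence lemma (Lemma $18$ of \cite{papasoglu2009boundaries}, valid for non‑proper actions as recalled in the Initial Remark) forces $\ultra{\omega}(\zeta)=\lim_n g_n^{-1}\zeta$ to lie within Tits distance $\pi/2$ of $\eta\in\bndry F$. If $\tangle{\zeta}{\xi}\le\pi/2$, then since $\ultra{\omega}$ is $1$‑Lipschitz for the Tits metric — being an $\omega$‑limit of Tits isometries, the Tits metric being lower semicontinuous for the cone topology — and $\ultra{\omega}(\xi)\in\bndry F$, we get $\titsm{\ultra{\omega}(\zeta)}{\bndry F}\le\pi/2$. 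So $\ultra{\omega}(\bndry{X})$ is a compact set sandwiched between the top‑dimensional round sphere $\bndry F$ and its $\pi/2$‑neighbourhood.

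The remaining step — upgrading "$\pi/2$" to "$0$", i.e. showing $\ultra{\omega}(\bndry{X})=\bndry F$ — is the heart of the matter and the main obstacle, and it is exactly here that one uses that $d$ is the \emph{geometric} dimension of \titsb{X}, equivalently that $d+1$ is the maximal dimension of a flat in $X$. I would argue by extremality: run the construction over all $(d+1)$‑flats in $G\cdot F_0$ and all regular escaping rays, and minimise the defect $\delta=\sup_{\zeta\in\bndry{X}}\titsm{\ultra{\omega}(\zeta)}{\bndry F}\in[0,\pi/2]$. If $\delta>0$, a $\zeta$ realising it — together with the \catone‑geometry of \titsb{X} and the round structure of $\bndry F$ (project $\ultra{\omega}(\zeta)$ onto $\bndry F$ and span with the directions of $F$) — should yield either an isometrically embedded spherical hemisphere of dimension $d+1$ in \titsb{X} or a $(d+2)$‑flat in $X$, contradicting geometric dimension $d$; alternatively one uses such a configuration to produce a further "fold" strictly decreasing $\delta$ (one could also try to bypass the extremal setup by exhibiting $\ultra{\omega}$ as idempotent up to collapse, using $\pi$‑convexity of its image). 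Hence $\delta=0$, so $\ultra{\omega}$ maps all of $\bndry{X}$ onto $\SS=\bndry F$, with $\bndry{F_0}$ mapped isometrically onto $\SS$ by the second paragraph. I expect the genuinely hard point to be precisely this last one: $\pi$‑convergence alone produces only the collar, and eliminating it requires exploiting the rigidity of top‑dimensional round spheres in Tits boundaries, which is the technical core of the folding lemma in \cite{guralnik2011atransversal}.
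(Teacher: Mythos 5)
First, a point of comparison: the paper does not prove this statement at all. It is quoted verbatim as Theorem $3.22$ of \cite{guralnik2011atransversal} (the folding lemma of Guralnik and Swenson) and used as a black box, so there is no in-paper argument to measure yours against; what you have attempted is effectively to reprove the technical core of that reference.

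Your sketch sets up the right objects (push the flat $F_0$ to infinity along a ray, take an ultralimit of the $g_n^{-1}$, identify a limit flat $F$, check that $\bndry{F_0}\rightarrow\bndry{F}$ is an isometry, use lower semicontinuity of the Tits metric to see that $\ultra{\omega}$ is $1$-Lipschitz, and use $\pi$-convergence to confine the image), but it contains a genuine gap exactly where you flag one: the passage from ``$\ultra{\omega}(\bndry{X})$ lies in the closed Tits $\pi/2$-neighbourhood of $\bndry{F}$'' to ``$\ultra{\omega}(\bndry{X})=\bndry{F}$''. Everything you offer for that step is conditional (``should yield\ldots'', ``alternatively\ldots'', ``one could also try\ldots''): you neither produce the $(d+1)$-dimensional spherical hemisphere or $(d+2)$-flat from a point at positive defect, nor show that the extremal defect $\delta$ is attained over your family of configurations, nor that a further fold strictly decreases it. This is not a routine detail: extracting a higher-dimensional spherical or flat configuration from a single boundary point at positive Tits distance from a top-dimensional round sphere requires the dimension theory of \catone\ spaces and the rigidity of top-dimensional spheres in Tits boundaries, and in \cite{guralnik2011atransversal} it is carried out through a careful analysis of the semigroup of $\omega$-limit maps rather than by the soft compactness arguments available in your outline. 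Two secondary points also need attention: the ``suitably regular direction'' $\xi$ is never actually defined, and the claim that cone convergence of the flats $g_n^{-1}F_0$ upgrades to Tits convergence of their boundary spheres (so that the limit boundary map is an isometry onto $\bndry{F}$) is asserted rather than proved --- it is true here, but precisely because these boundaries are round spheres of flats, and that deserves an argument. As written, the proposal is an honest road map for the strategy of \cite{guralnik2011atransversal} with its hardest step left open, so it cannot be accepted as a proof.
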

\begin{rmk}
  The flat $F_0$ exists, see the theorem C in \cite{kleiner1999local} and in particular the equivalent statements $3$ and $5$.
\end{rmk}
But we have:
\begin{lemma}
  Let $X$ be a Tits compact \catz\ space on which a discrete group $G$ acts. Then for every $\omega\in\beta G$, $x\mapsto\ultra{\omega}(x)$ is distance preserving on \bndry{X}.
\end{lemma}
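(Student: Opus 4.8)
The plan is to unwind the definition of $\ultra{\omega}$ and then play the cone topology off against the Tits topology. Recall that for $\xi\in\bndry{X}$ the point $\ultra{\omega}(\xi)$ is, by construction, the limit along the ultrafilter $\omega\in\beta G$ of the net $(g\cdot\xi)_{g\in G}$, taken in the compact space $\coneb{X}$. Each $g\in G$, being an isometry of $X$, induces an isometry of $\bndry{X}$ both for the Tits angle and for the Tits metric (a standard consequence of $g$, and of its inverse $g^{-1}$, being isometries of $X$); in particular $\titsm{g\xi}{g\eta}=\titsm{\xi}{\eta}$ for all $\xi,\eta\in\bndry{X}$. So the only thing standing between us and the conclusion is that $\ultra{\omega}(\xi)$ is defined by a limit in the \emph{cone} topology, which is a priori too coarse to control the Tits metric.

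The key step is to upgrade that defining limit to a limit for the Tits topology, and this is exactly where Tits compactness enters. Since $X$ is Tits compact, $\bndry{X}$ with the Tits metric is compact, so the net $(g\cdot\xi)_{g\in G}$ does have \emph{some} limit $\xi'$ along $\omega$ for the Tits topology. The Tits topology refines the cone topology, so $(g\cdot\xi)_{g\in G}$ also converges to $\xi'$ along $\omega$ in $\coneb{X}$; as $\coneb{X}$ is Hausdorff, limits along $\omega$ are unique there, and therefore $\xi'=\ultra{\omega}(\xi)$. Hence $\ultra{\omega}(\xi)=\lim_{\omega}g\cdot\xi$ already in the Tits topology.

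With this in hand I would conclude by passing the equalities $\titsm{g\xi}{g\eta}=\titsm{\xi}{\eta}$ to the limit along $\omega$. The Tits metric (possibly infinite valued) is continuous for the Tits topology, since it is a metric inducing that very topology and the triangle inequality yields the usual Lipschitz estimate. So, fixing $\xi,\eta\in\bndry{X}$, the net $\big((g\xi,g\eta)\big)_{g\in G}$ converges to $(\ultra{\omega}(\xi),\ultra{\omega}(\eta))$ along $\omega$ in $\bndry{X}\times\bndry{X}$, and
\begin{displaymath}
  \titsm{\ultra{\omega}(\xi)}{\ultra{\omega}(\eta)}=\lim_{\omega}\titsm{g\xi}{g\eta}=\titsm{\xi}{\eta},
\end{displaymath}
the last equality because the net on the left is constant. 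Running the same computation with the Tits angle (also continuous for the Tits topology) in place of the Tits metric shows $\ultra{\omega}$ preserves the Tits angle as well.

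The ``main obstacle'', such as it is, lies entirely in the middle step: one must be careful that compactness for the Tits metric is precisely what promotes the defining cone-topology ultralimit to a Tits-topology ultralimit, and that Hausdorffness of the cone topology is what lets one identify the two. Everything else --- each $g$ being a Tits isometry, continuity of the Tits metric, and pushing an exact equality through an ultralimit --- is soft. It is worth noting the contrast with the familiar fact that a cone-topology limit of isometries of $X$ need only be non-expanding for the Tits metric: here one gets genuine distance preservation, precisely because each $g$ preserves the Tits metric on the nose rather than merely up to an error that might survive the limit.
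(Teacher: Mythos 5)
Your proof is correct and follows essentially the same route as the paper's: both hinge on the observation that Tits compactness, together with the Tits topology refining the Hausdorff cone topology, forces the defining cone-topology ultralimit to be a Tits-topology limit as well (the paper phrases this as Tits balls around $\ultra{\omega}(x)$ being cone-open, and then runs a $1/n$ estimate instead of invoking continuity of the metric). The differences are purely cosmetic.
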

\begin{proof}
  Let us pick $x,y\in\titsb{X}$. Then for all $n>0$ the set of all $g\in G$ such that $\titsm{g(x)}{\ultra{\omega}(x)}<1/n$ and $\titsm{g(y)}{\ultra{\omega}(y)}<1/n$ is not empty (it has $\omega$ probability $1$, because the Tits balls of radius $1/n$ around $g(x)$ and $g(y)$ are open in the cone topology), so we can fix such an element $h_n\in G$ for each $n$. We have for all $n\in\N$:
  \begin{equation*}
   |\titsm{\ultra{\omega}(x)}{\ultra{\omega}(y)}-\titsm{x}{y}|=|\titsm{\ultra{\omega}(x)}{\ultra{\omega}(y)}-\titsm{h_{n}(x)}{h_{n}(y)}|
  \end{equation*}
Since this is true for every $n$, $\ultra{\omega}$ is distance preserving.
\end{proof}
We finally conclude this section with the:
\begin{prop}
     Suppose $G$ acts geometrically on a Tits compact \catz\ space $X$. Then \titsb{X} is a metric unit sphere, \can{X} is flat, and $G$ is a Bieberbach group.
\end{prop}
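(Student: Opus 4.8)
The plan is to run the folding lemma (Theorem~\ref{thm:gural}) on a top-dimensional flat and then exploit Tits compactness to turn the resulting folding map into an honest isometry onto a round sphere. Let $d$ be the geometric dimension of $\titsb{X}$. By the remark recorded right after the folding lemma (Kleiner's theorem~C in \cite{kleiner1999local}), $X$ contains a $(d+1)$-flat $F_0$, i.e.\ a convex subset isometric to $\R^{d+1}$. Theorem~\ref{thm:gural} then produces $\omega\in\beta G$ and a $(d+1)$-flat $F\subseteq X$ such that $\ultra{\omega}$ carries all of $\bndry{X}$ onto $\SS=\bndry{F}$. Here is where I would use Tits compactness: by the lemma immediately preceding this proposition, $x\mapsto\ultra{\omega}(x)$ preserves the Tits metric on $\bndry{X}$, in particular it is injective, and together with surjectivity onto $\SS$ this makes $\ultra{\omega}$ a bijective isometry from $\titsb{X}$ onto $\SS$. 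Since $F$ is isometric to $\R^{d+1}$, the space $\SS=\titsb{F}$ is the standard metric sphere of radius one and dimension $d$; hence $\titsb{X}$ is a metric unit sphere. The second assertion is then immediate: $X$ is cocompact (the $G$-action being geometric), so Proposition~\ref{prop:bndrysphere} applies and shows that $\can{X}$ is flat, say isometric to $\R^m$ for some $m$.

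For the third assertion I would restrict the $G$-action to the canonical boundary minimal subspace $\can{X}\cong\R^m$; this is legitimate because $\can{X}$ is fixed by every isometry of $X$. The restricted action is still proper (for a compact $C\subseteq\can{X}$ the set of $g\in G$ with $gC\cap C\neq\emptyset$ does not depend on whether one works inside $\can{X}$ or inside $X$), and it is cocompact: picking a compact $K\subseteq X$ with $GK=X$ and letting $\pi\colon X\to\can{X}$ be the nearest-point projection onto the closed convex set $\can{X}$, the map $\pi$ is $1$-Lipschitz, surjective and $G$-equivariant, so $\pi(K)$ is compact and $G\cdot\pi(K)=\pi(X)=\can{X}$; writing $x=gk$ with $k\in K$ one even gets $d(x,\pi(x))=d(k,\pi(k))\le\operatorname{diam}(K\cup\pi(K))$, so $X$ lies at finite Hausdorff distance from $\can{X}$. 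Thus $G$ acts properly and cocompactly by isometries on $\R^m$. The kernel of $G\to\isom{\R^m}$ fixes $\can{X}$ pointwise, hence fixes a point of $X$, and is therefore finite by properness of the action on $X$; Bieberbach's theorem applied to the image now yields that $G$ is a Bieberbach group.

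The real weight is carried entirely by the two cited inputs --- Kleiner's theorem for the existence of $F_0$ and, above all, the folding lemma --- whereas the one genuinely new point is the short observation that Tits compactness forces the folding map to be injective, upgrading it to an isometry between $\titsb{X}$ and the round sphere $\SS$. The step I would watch most carefully is the last paragraph: verifying that a geometric action on $X$ restricts to a geometric action on the invariant flat $\can{X}$ through the nearest-point projection, and dealing with its finite kernel before quoting Bieberbach's theorem.
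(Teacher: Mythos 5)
Your proposal is correct and follows essentially the same route as the paper: the folding lemma produces a surjection of $\bndry{X}$ onto the boundary sphere of a top-dimensional flat, Tits compactness (via the preceding lemma) upgrades it to an isometry, and then Proposition~\ref{prop:bndrysphere} and Bieberbach's theorem finish the argument. The only difference is that you spell out the reduction to a geometric action on $\can{X}\cong\R^m$ and the finiteness of the kernel, which the paper leaves implicit in its citation of Bieberbach's theorem.
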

\begin{proof}
  According to the proposition \ref{prop:bndrysphere} and Bieberbach' s theorem (see the Theorem $4.2.2$ in the beautiful book \cite{thurston1997three}), it is enough to show that \titsb{X} is a unit sphere. Theorem \ref{thm:gural} gives us a ultra-filter $\omega\in\beta G$ and a metric sphere $\SS\subset\titsb{X}$ such that $\ultra{\omega}$ maps \bndry{X} surjectively onto $\SS$. According to the previous lemma it is also distance preserving, so it is an isometry.
\end{proof}
\section{Dynamics of hyperbolic isometries and Tits compactness}\label{sec:hypcomp}

In section \ref{sec:geodcomp} we will have to distinguish between two possibilities: either the full isometry group fixes a point at infinity or it contains a hyperbolic element. This section will be useful in the later case.
\begin{lemma}\label{lem:split}
Suppose $X$ is a Tits compact \catz\ space that admits a hyperbolic isometry $a$ and choose an axis $c$ of $a$. Consider a ray $r$ emanating from $c$ and different from $c$ itself (we assume such a ray exists). Then $r$ lies in some half flat plane with edge $c$.
\end{lemma}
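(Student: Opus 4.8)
The plan is to reduce the statement to a rigidity property of the Busemann function of $\xi:=r(\infty)$ along the axis, and then to extract that property from the compactness of \titsb{X} through a recurrence argument for $a$.

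\textbf{Reductions.} Set $x:=r(0)\in c$; after reparametrising $c$ we may assume $c(0)=x$, and note that saying $r$ is not a subray of $c$ amounts to $\xi\notin\{c(+\infty),c(-\infty)\}$. Orient $c$ so that $a$ translates it towards $c(+\infty)$ with translation length $\ell>0$, i.e. $a(c(t))=c(t+\ell)$. Let $b_{\pm}$ be the Busemann functions of $c(\pm\infty)$ based at $x$; then $b_{\pm}(c(t))=\mp t$, and $f:=b_{+}+b_{-}$ is convex, vanishes on $c$, and is $\ge 0$ (from $d(z,c(t))+d(z,c(-s))\ge t+s$). First I would record that $\{f=0\}\subseteq\para{c}$: if $f(z)=0$, then using $f(z)=0$, $f\ge 0$, and that $b_{\pm}$ are $1$-Lipschitz and decrease at unit speed along rays pointing at $c(\pm\infty)$, one gets $b_{+}(\gamma(u))=b_{+}(z)-u$ along the concatenation $\gamma$ of the rays from $z$ to $c(-\infty)$ and to $c(+\infty)$, which forces $\gamma$ to be a geodesic line; it is asymptotic to $c$ at both ends, hence parallel to $c$, so $z\in\para{c}$. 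By the flat strip theorem $\para{c}=\R\times Y$ with the first factor in the direction of $c$ and $c=\R\times\{y_{c}\}$. Consequently, once we know $f\equiv 0$ on $r$ we are done: then $r\subseteq\R\times Y$, its $Y$-coordinate is a non-constant geodesic ray $\bar r$ from $y_{c}$ (non-constant precisely because $r$ is not a subray of $c$), and $r\subseteq\R\times\bar r([0,\infty))$, a convex subset isometric to a Euclidean half-plane with edge $\R\times\{y_{c}\}=c$. Finally, $f(r(t))/t\to-\cos\tangle{\xi}{c(+\infty)}-\cos\tangle{\xi}{c(-\infty)}$ since $b_{\pm}(r(t))/t\to-\cos\tangle{\xi}{c(\pm\infty)}$; as $t\mapsto f(r(t))$ is convex, nonnegative and zero at $0$, it vanishes identically as soon as this limit is $\le 0$. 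Since $\tangle{\xi}{c(+\infty)}+\tangle{\xi}{c(-\infty)}\ge\tangle{c(+\infty)}{c(-\infty)}=\pi$ automatically (triangle inequality for angles at $x$), it remains only to prove the opposite inequality, i.e. that $s\mapsto b_{\xi}(c(s))$ is affine with opposite asymptotic slopes at its two ends.

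\textbf{Using Tits compactness.} Since $a$ is an isometry of the compact metric space $(\titsb{X},d_{T})$, the orbit $(a^{n}\xi)_{n\ge 1}$ is relatively compact, hence recurrent: there exist $N_{m}\to\infty$ with $\titsm{a^{N_{m}}\xi}{\xi}\to 0$ (in the periodic case $a^{p}\xi=\xi$ just take $N_{m}=mp$). Write $h(s):=b_{\xi}(c(s))$, convex and $1$-Lipschitz with $h(0)=0$. Naturality of Busemann functions under the isometry $a^{N_{m}}$, which sends $x$ to $c(N_{m}\ell)$ and $c(s)$ to $c(s+N_{m}\ell)$, gives $b_{a^{N_{m}}\xi}(c(s))=h(s-N_{m}\ell)-h(-N_{m}\ell)$; since $\titsm{a^{N_{m}}\xi}{\xi}\to 0$ forces $a^{N_{m}}\xi\to\xi$ in the cone topology, and Busemann functions depend continuously on their point, the left-hand side converges to $h(s)$. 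Writing the right-hand side as $\int_{0}^{s}h'(v-N_{m}\ell)\,dv$ and using that $h'$ is monotone and bounded in $[-1,1]$, dominated convergence yields $h(s)=\mu s$, where $\mu=\lim_{u\to-\infty}h'(u)$; so $h$ is affine. Its slope $\mu$ is the asymptotic rate of $b_{\xi}$ towards $c(+\infty)$, namely $-\cos\tangle{\xi}{c(+\infty)}$, and $-\mu$ is the rate towards $c(-\infty)$, namely $-\cos\tangle{\xi}{c(-\infty)}$. Hence $\cos\tangle{\xi}{c(+\infty)}+\cos\tangle{\xi}{c(-\infty)}=0$, which is exactly the inequality we needed, and the proof is complete.

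\textbf{Main obstacle.} The genuinely substantive step is the last one: converting mere compactness of \titsb{X} into the rigid conclusion that $b_{\xi}$ is affine along the axis. The recurrence is soft, but one must be careful that $a$ acts isometrically for the \emph{Tits metric} and not only for the cone topology, that Tits convergence is strong enough to pass to the limit in the Busemann functions (so the cone topology suffices there), and that the convex-analysis limit above is applied correctly. Everything else—the description of \para{c} as $\R\times Y$, convexity and asymptotic slopes of Busemann functions, the flat strip theorem—is routine CAT(0) geometry.
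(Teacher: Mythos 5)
Your proof is correct, and it takes a genuinely different route from the one in the paper. The paper's argument is angle-based: it applies Arzelà--Ascoli to the negative powers $a^{-n}$ acting on the compact Tits boundary, obtains a uniformly convergent subsequence whose limit is a \emph{surjective} Tits isometry $b$ fixing $c(+\infty)$, combines the monotone convergence $\angle_{c(t)}(c(+\infty),x)\nearrow\tangle{c(+\infty)}{x}$ with equivariance to conclude that $\angle_{c(0)}(c(+\infty),y)=\tangle{c(+\infty)}{y}$ for every $y\in\bndry{X}$ (surjectivity of $b$ is what removes the push-forward), and then glues the resulting flat sectors into a half-plane by sliding the basepoint along $c$. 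You instead use compactness only through recurrence of the single point $\xi=r(\infty)$ under the induced Tits isometry, feed that recurrence into the cocycle identity $b_{a^{N}\xi,x}(c(s))=h(s-N\ell)-h(-N\ell)$ to show that $b_\xi$ is affine along the axis, convert this via the asymptotic-slope formula into $\cos\tangle{\xi}{c(+\infty)}+\cos\tangle{\xi}{c(-\infty)}=0$, and then obtain the half-plane directly from the product structure of \para{c} once $r$ is trapped in $\{b_++b_-=0\}$. Your route buys several things: recurrence to $\xi$ itself replaces the surjectivity of a limit map; the only limit interchange needed is the cone-continuity of $\eta\mapsto b_{\eta,x}$, which is robust in proper spaces (whereas the paper's equation \eqref{eq:part2} leans on continuity of the angle at a fixed point under cone convergence of the endpoint); and the gluing of sectors is replaced by the flat strip theorem. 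You also correctly identify where Tits compactness (as opposed to mere cone compactness) enters: a homeomorphism of a compact space need not have recurrent points, but an isometry of a compact metric space does, and $a$ is an isometry only for the Tits metric. The ingredients you take as standard --- the slope formula $b_\eta(\sigma(t))/t\to-\cos\tangle{\sigma(\infty)}{\eta}$ and the characterisation of \para{c} as the zero set of $b_++b_-$ --- are indeed classical and correctly applied, so I see no gap.
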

\begin{proof}
  Let's write $b$ for the restriction of $a$ to the boundary at infinity \bndry{X}, and consider the sequence of maps $\left(b^{-n}\right)_{n\in\N}$. This is a sequence of isometries for the Tits metric which is assumed to give rise to a compact topology. According to the Arzelà-Ascoli theorem, we can fix a series of strictly increasing integers $n_k$ such that $\left(b^{-n_k}\right)$ converges uniformly to a continuous map $b$ of \bndry{X}. Now this map must be distance preserving as a pointwise limit of such maps. But a distance preserving map of a compact metric space is always surjective, and so $b$ is a (surjective) isometry of \titsb{X}. This will prove useful later. Write $d$ for the minimum displacement of $a$ i.e. $d=\inf_{x\in X} d(x,a(x))$.

Consider  an asymptotic direction $x$ different from $c(\pm\infty)$. Then we know (see Part \Roman{deux} proposition 9.8 in \cite{BridsonHaefliger201011}) that the angles $\angle_{c(d\cdot n_k)}\left(c(+\infty),x\right)$ increase to the Tits angle $\angle(c(+\infty),x)$:
\begin{equation}\label{eq:part0}
  \lim_{k\rightarrow+\infty}\angle_{c(d\cdot n_k)}(c(+\infty),x)=\angle(c(+\infty),x).
\end{equation}
Now if we push everything forward in the left hand side of this equality via the isometry $a^{-n_k}$ we get:
\begin{equation}\label{eq:part1}
  \lim_{k\rightarrow+\infty}\angle_{c(0)}(c(+\infty),b^{-n_k}(x))=\angle(c(+\infty),x).
\end{equation}
But the sequence of maps $b^{-n_k}$ also converge pointwise to $b$ in the cone topology, so we have:
\begin{equation}\label{eq:part2}
  \lim_{k\rightarrow+\infty}\angle_{c(0)}(c(+\infty),b^{-n_k}(x))=\angle_{c(0)}(c(+\infty),b(x)).
\end{equation}
and since $b$ is an isometry that fixes $c(+\infty)$ (because all the $b^{-n_k}$ do) we also have:
\begin{equation}\label{eq:part3}
\angle(c(+\infty),x)=\angle(c(+\infty),b(x)).
\end{equation}
Combining, \eqref{eq:part1}, \eqref{eq:part2} and \eqref{eq:part3} we finally get:
\begin{equation*}
\angle_{c(0)}(c(+\infty),b(x))=\angle(c(+\infty),b(x)).
\end{equation*}
But this equality implies that $c(0)$, $c(+\infty)$ and $b(x)$ span an infinite flat triangle, and this for every $x$ (the case where $x$ is either of $c(+\infty)$ and $c(-\infty)$ being trivial). Now by surjectivity of $b$ we just proved that given any asymptotic direction $y\in\bndry{X}$, $c(0)$, $c(+\infty)$ and $y$ span an infinite flat totally geodesic triangle. We call $\Delta_y$ this triangle.

Now remark that the parametrization of $c$ was chosen arbitrarily, so the previous results still holds if we replace $c$ by $c^\prime:t\rightarrow c(t-1)$. In particular, whenever we have an asymptotic direction $y\in\bndry{X}$, the three points $c^\prime(0)=c(-1)$, $c^\prime(+\infty)=c(+\infty)$ and $y$ span convex geodesic flat triangle. We write $\Delta^\prime_y$ for this triangle. But then the euclidean triangle $\Delta^\prime_y$ contains asymptotic rays to $y$ emanating from $c(t)$ for every $t\geq 0$. Since those rays are uniquely determined and lie in $\Delta_y$, we conclude that $\Delta^\prime_y$ extends $\Delta_y$. Proceeding recursively, we then obtain a totally geodesic flat half plane $\Pi_y$ with border $c$ and $y$ on its boundary.

Now if $r$ is the ray of the proposition, it suffices to take $y=r(\infty)$.
\end{proof}
The same method as in the proof of proposition \ref{prop:bndrysphere} then gives:
\begin{prop}\label{prop:hypflat}
 Suppose $X$ is a cocompact Tits compact \catz\ space that admits a hyperbolic isometry. Then $\can{X}$ splits with a euclidean factor.
\end{prop}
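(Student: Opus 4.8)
The plan is to run the argument of Proposition~\ref{prop:bndrysphere}, with Lemma~\ref{lem:split} playing the role of the half-plane construction there. First I would reduce to the boundary minimal case. As $X$ is cocompact it possesses a canonical boundary minimal subspace $\can{X}$; being a closed convex subset of the proper space $X$ it is again a proper \catz\ space, it is boundary minimal, and it is again Tits compact since $\bndry{\can{X}}=\bndry{X}$. Every isometry of $X$ stabilises $\can{X}$, and the restriction of an isometry to a stabilised convex subset has the same type, so the given hyperbolic isometry restricts to a hyperbolic isometry of $\can{X}$; let $c$ be one of its axes, a geodesic line contained in $\can{X}$. Since the canonical boundary minimal subspace of a boundary minimal space is the space itself, and since the conclusion to be proved only concerns $\can{X}$, I may replace $X$ by $\can{X}$ and assume henceforth that $X$ is itself boundary minimal; cocompactness will then no longer be used, its only role having been to ensure the existence of $\can{X}$, whereas Lemma~\ref{lem:split} needs only properness, Tits compactness and a hyperbolic isometry.

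If $\bndry{X}$ consists only of the two points $c(+\infty)$ and $c(-\infty)$, then by minimality $X=c$ is isometric to $\R$ and there is nothing more to prove; so assume there is $y\in\bndry{X}$ with $y\ne c(\pm\infty)$. Let $r_y$ be the unique ray from $c(0)$ asymptotic to $y$ (it exists because $X$ is complete); it emanates from $c$ and is not contained in $c$, so Lemma~\ref{lem:split} yields a half flat plane $\Pi_y\subseteq X$ with edge $c$ containing $r_y$, and hence with $y\in\bndry{\Pi_y}$. The key observation is that $\Pi_y$, being isometric to $\R\times[0,+\infty)$ with $c$ corresponding to $\R\times\{0\}$, is the union of the lines corresponding to $\R\times\{t\}$ for $t\ge 0$, each of which is at finite Hausdorff distance from $c$ and therefore parallel to $c$; consequently $\Pi_y\subseteq\para{c}$ and $y\in\bndry{\para{c}}$. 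Since also $c(\pm\infty)\in\bndry{c}\subseteq\bndry{\para{c}}$, we obtain $\bndry{\para{c}}=\bndry{X}$. By the flat strip theorem $\para{c}$ is a closed convex subspace of $X$ that splits as $\R\times Z$ for some \catz\ space $Z$, and since it has full boundary, minimality of $X$ forces $\para{c}=X$. Thus $\can{X}=X=\R\times Z$ splits with a euclidean factor.

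The step I expect to require the most care is the reduction in the first paragraph: verifying that it is legitimate to pass from $X$ to $\can{X}$ and that the properties actually used below — properness, Tits compactness, and the existence of a hyperbolic isometry with an axis lying inside $\can{X}$ — are all inherited. The geometric heart of the proof, the inclusion of a half flat plane with edge $c$ into the parallel set $\para{c}$, is short; once it is in place the conclusion follows, exactly as in Proposition~\ref{prop:bndrysphere}, from Lemma~\ref{lem:split}, the flat strip theorem and boundary minimality.
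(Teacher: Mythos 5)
Your proof is correct and follows essentially the same route as the paper's: reduce to the boundary minimal case, use Lemma~\ref{lem:split} to show that the parallel set $\para{c}$ has full boundary, and conclude by the flat strip theorem and minimality that $X=\para{c}$ splits off an $\R$-factor. You merely fill in details the paper leaves implicit (the passage to $\can{X}$, the two-point boundary case, and the inclusion $\Pi_y\subseteq\para{c}$), all of which check out.
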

\begin{proof}
  We can always assume that $X$ is boundary minimal. We use the notations of the above lemma. Then the set of parallel lines to $c$ (remark it closed and convex) has full boundary in $X$ (this is the lemma \ref{lem:split}) and splits with a \R\ factor. By minimality, so does $X$. 
\end{proof}
\begin{rmk}\label{rmk:piconv}
  We could have used the $\pi$-convergence property, i.e. lemma $18$ of \cite{papasoglu2009boundaries}, to show that $\bndry{X}$ splits as a join $\mathcal{S}^{0}\star Y$. Let us sketch the proof. Since the limit points $n$ and $p$ (we use the notations of \cite{papasoglu2009boundaries}) are at distance not lower than $\pi$ apart here (since they are joined by the axis of the hyperbolic isometry), the $\pi$-convergence property easily shows that for each point $a\in\bndry{X}$, we have $d(n,p)=d(n,a)+d(a,p)$ (and in fact $d(n,p)=\pi$). Now we can use the lemma $3.19$ of \cite{guralnik2011atransversal} to conclude.
\end{rmk}
\section{Dynamics of parabolic groups and Tits compactness}

In this section we study \catz\ spaces that admit a cocompact action by a group of isometries fixing a point at infinity. We were not able to show the existence of hyperbolic elements. However, we construct a series of isometries $\left(g_n\right)_{n\in\N}$ that behaves much like iterates of a single hyperbolic isometry. Then a refinement of the argument used in section \ref{sec:hypcomp} will give us the desired flat factor.
\begin{prop}\label{prop:veryhyp}
  Let $X$ be a \catz\ space and $G$ a group of isometries that acts cocompactly on $X$ and fixes a point $p$ on the boundary \bndry{X}. Then there exists a series $\left(g_n\right)_{n\in\N}$ of isometries in $G$, a geodesic line $R$ and a real $d\geq0$ such that:
  \begin{enumerate}
  \item\label{it:geodline} $R(-\infty)=p$,
  \item\label{it:distatmost} For every integer $n$, $g_n(R)$ is parallel to $R$ and lies at distance at most $d$ to it,
  \item\label{it:limitpt} $\left(g_n^{\pm}(q)\right)_{n\in\N}$ converges to $R(\pm\infty)$ for every $q\in X$.
  \end{enumerate}
\end{prop}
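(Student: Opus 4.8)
The plan is to mimic the situation of a hyperbolic isometry treated in Section \ref{sec:hypcomp}, but starting only from the fixed point $p$ and a cocompact group $G$. First I would produce the geodesic line $R$ with $R(-\infty)=p$. Since $X$ is noncompact (if it were compact its boundary would be empty and there would be no $p$), cocompactness plus properness gives a geodesic line; but I need one of its ends to be $p$. For this I would pick a Busemann function $b$ based at $p$ and use cocompactness to find, for each $n$, an element $h_n\in G$ moving a base point $x_0$ so that $b(h_n(x_0))$ decreases to $-\infty$ while staying in a fixed compact set $K$ modulo $G$; concretely, choose $x_n$ on the horocyclic geodesic ray toward $p$ with $b(x_n)=-n$, write $x_n=g_n(y_n)$ with $y_n\in K$, and set $R$ to be a limit (Arzelà--Ascoli) of the geodesic segments from $g_n^{-1}(x_0)$ through $y_n$; the control on $b$ ensures the limit line has $R(-\infty)=p$. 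This simultaneously pins down the sequence $(g_n)$ and the constant $d$ will come from the diameter of $K$.

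Next I would establish \eqref{it:distatmost}, that $g_n(R)$ is parallel to $R$ at bounded distance. The key point is that all the lines $g_n(R)$ share the endpoint $p$ at $-\infty$ (because $G$ fixes $p$ and $R(-\infty)=p$), and they all pass within a bounded neighbourhood of the fixed compact set (by construction $g_n^{-1}(x_n)\in K$). Two geodesic lines asymptotic at one end that come uniformly close must be parallel: convexity of the distance function between them, together with the fact that it does not blow up as one goes toward $p$, forces it to be bounded, hence (flat strip theorem) they bound a flat strip. The uniform bound $d$ is then exactly $\operatorname{diam}(K)$ up to an additive constant.

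For \eqref{it:limitpt}, the convergence $g_n^{\pm}(q)\to R(\pm\infty)$ should follow from the construction: $g_n$ is chosen to translate $R$ a distance tending to $+\infty$ along itself toward $R(+\infty)$ (that is what ``$b(x_n)=-n$'' encodes), so $g_n$ pushes any bounded set toward $R(+\infty)$ in the cone topology, and the inverse pushes it toward $R(-\infty)=p$; properness and the flat-strip picture let one upgrade pointwise statements on a base point to all $q\in X$. Here one must be mildly careful that $g_n$ does not simultaneously ``drift sideways'' inside the strip by an unbounded amount — but that is ruled out exactly by the bound $d$ from item \eqref{it:distatmost}, so the transverse component of $g_n$ stays bounded and only the $R$-component escapes to $+\infty$.

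The main obstacle I anticipate is the very first step: arranging that the limiting line $R$ has $R(-\infty)=p$ rather than having $p$ somewhere ``in the interior'' of the picture or escaping it entirely in the limit. Controlling the Busemann function $b$ along the approximating segments — in particular ruling out that the segments wander off so that the limit line misses $p$ — is the delicate part, and it is where properness of $X$ and the precise bookkeeping of the $g_n$ (keeping $g_n^{-1}(x_n)$ in a fixed compact set while $b(x_n)\to-\infty$) really get used. Once $R$ and $(g_n)$ are in hand with these two properties, items \eqref{it:distatmost} and \eqref{it:limitpt} are comparatively routine consequences of the flat strip theorem and convexity of the metric.
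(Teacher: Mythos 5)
Your construction of the line $R$ with $R(-\infty)=p$ is fine and matches what the paper dismisses as the easy step. The genuine gap is in condition \ref{it:distatmost}. You argue that since $R$ and $g_n(R)$ are both asymptotic to $p$ and both pass near the fixed compact set $K$, convexity of $t\mapsto d(R(t),g_n(R))$ forces this function to be bounded. That is not what convexity gives you: convexity plus boundedness as $t\to-\infty$ (toward $p$) only makes the function \emph{nondecreasing}, so a close approach at some time $t_0$ controls the distance for $t\le t_0$ and says nothing for $t>t_0$, where the distance may grow linearly. Two lines sharing the endpoint $p$ and meeting a common compact set need not be parallel (think of two lines in a tree, or in $\R\times T$, that share one end and a vertex but diverge at the other end). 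Worse, with your choice of $g_n$ (sending $y_n\in K$ to a point $x_n$ at Busemann depth $-n$), the only close approach you can certify between $g_n(R)$ and $R$ happens at Busemann depth about $-n$, i.e.\ on the side toward $p$ — exactly the side where convexity gives you nothing new. The same issue undermines your argument for condition \ref{it:limitpt}, since the ``bounded transverse drift'' you invoke presupposes that $g_n$ acts on a flat strip containing $R$.

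The paper closes this gap with a double-indexed limit that your single sequence cannot reproduce. For each fixed $n$ one takes, for every $k$, an element $h_{n,k}\in G$ carrying $R(k)$ to within $d$ of $R(k+n)$; the Busemann character at $p$ then bounds $d(x,h_{n,k}(x))$ by $n+2d$ \emph{uniformly in $k$}, so properness lets one extract a limit $g_n$ of $(h_{n,k})_k$ as $k\to\infty$. The point is that the close-approach parameter $n+k$ escapes to $+\infty$ with $k$, so the monotonicity from convexity yields $d(R(t),h_{n,k}(R))\le d$ for all $t\le n+k$, and in the limit $g_n(R)$ is genuinely parallel to $R$ at Hausdorff distance at most $d$. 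Only then does the Pythagorean computation in the resulting flat strip give condition \ref{it:limitpt}. If you want to salvage your approach, you must build this $k\to\infty$ limit for each fixed translation length $n$; choosing one group element per $n$ that merely returns a deep horoball point to $K$ is not enough. (A minor separate slip: with $b(x_n)=-n$ your $g_n$ pushes toward $p=R(-\infty)$, not toward $R(+\infty)$ as you assert; this is only an orientation issue.)
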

\begin{proof}
  It is easy to construct a geodesic line $R$ satisfying \ref{it:geodline}. Now  fix $x=R(0)$ and $d>0$ such that the translates of the ball $\ball{x}{d}$ by all isometries of $G$ covers $X$.

Take $n\in\N$ and let us construct the isometry $g_n$ above. For each integer $k>0$, fix an isometry $h_{n,k}$ in $G$ that takes $R(k)$ within $d$ of $R(k+n)$. Consider a reparametrization $L_{n,k}$ of $h_{n,k}(R)$ such that $L_{n,k}(0)$ and $x$ belong to the same horosphere centered at $p$ (this is possible because these two lines are asymptotic to $p$). Classically, the function $f_{n,k}:t\mapsto d(R(t),L_{n,k})$ is increasing (it is convex with a finite limit at $-\infty$) and so:
\begin{figure}[!h]
  \centering
  \begin{tikzpicture}

\clip (-3.50,-2.50) rectangle (1.50,3.00);
\draw (0.00,-2.50) -- (0.00,3.00);
\draw (-1.00,-2.50) .. controls (-1.00,0.50) .. (-3.00,3.00);
\draw (-0.50,-16.99) circle (15.00);
\draw (-0.05,0.30) -- (0.05,0.30);
\draw (-0.05,2.00) -- (0.05,2.00);
\draw (0.00,0.30) node[anchor=west] { $R(k)$ };
\draw (0.00,2.00) node[anchor=west] { $R(n+k)$ };
\draw[->] (0.10,-2.10) -- (0.10,-2.50);
\draw (0.10,-2.30) node[anchor=west] { $p$ };
\draw[->] (-1.10,-2.10) -- (-1.10,-2.50);
\draw (-1.10,-2.30) node[anchor=east] { $p$ };
\draw (0.00,-2.00) node[anchor=south west] { ${x=R(0)}$ };
\draw (-1.00,-2.00) node[anchor=south east] { ${L_{n,k}(0)}$ };
\draw (-2.59,2.41) -- (-2.52,2.48);
\draw (-2.56,2.44) node[anchor=south west] { ${h_{n,k}(R(n+k))}$ };
\draw[<->,dashed] (-2.46,2.43) -- (-0.10,2.02);
\draw (-1.28,2.22) node[anchor=north] { $\leq d$ };
\end{tikzpicture}
  \caption{Definition of $h_{n,k}$.}
  \label{fig:hnk}
\end{figure}
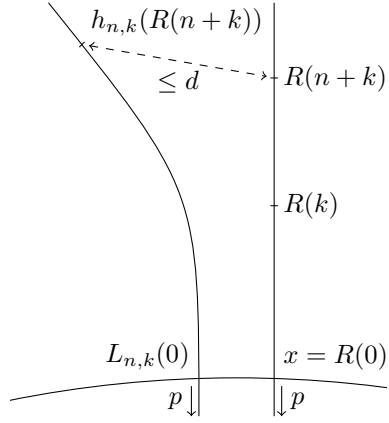
\begin{equation}\label{eq:lineconv}
\forall t\leq n+k,\ f_{n,k}(t)\leq f_{n,k}(n+k)\leq d(R(n+k),h_{n,k}(R(k)))\leq d.
\end{equation}
Moreover $h_{n,k}$ sends $x=R(0)$ to $L_{n,k}(\gamma_{n,k})$ where $\gamma_{n,k}$ is the image of $h_{n,k}$ under the Busemann character based at $p$, i.e. $\gamma_{n,k}=\buse{p}{y}{h_{n,k}(y)}$ for any $y\in X$. But then (Busemann functions are $1$-lipschitz):
\begin{align*}
  \gamma_{n,k}=\buse{p}{R(k)}{h_{n,k}(R(k))}&\leq d(R(k),h_{n,k}(R(k)))\\
  &\leq d(R(k),R(n+k))+d(R(n+k),h_{n,k}(R(k)))\\
  &\leq n+d,
\end{align*}
and so:
\begin{equation}\label{eq:majo}
  d(x,h_{n,k}(x))\leq \underbrace{d(x,L_{n,k}(0))}_{=f_{n,k}(0)}+\underbrace{d(L_{n,k}(0),L_{n,k}(\gamma_{n,k}))}_{=\gamma_{n,k}}\leq d+n+d.
\end{equation}
This bound \eqref{eq:majo} does not depend on $k$. By properness of $X$ we can hence take a converging subsequence of $\left(h_{n,k}\right)_{k\in\N}$. We choose $g_n$ as the limit of this series. Now the condition \ref{it:distatmost} of the proposition is obtained by letting $k$ go to infinity in \eqref{eq:lineconv}. Let us prove that the condition \ref{it:limitpt} also holds. The cocycle relation for Busemann functions gives:
\begin{equation*}
\buse{p}{x}{h_{n,k}(x)}=\buse{p}{x}{R(n+k)}+\buse{p}{R(n+k)}{h_{n,k}(x)},
\end{equation*}
and hence:
\begin{equation*}
\gamma_{n,k}=\buse{p}{x}{h_{n,k}(x)}\geq\buse{p}{x}{R(n+k)}-d(R(n+k),h_{n,k}(x))\geq n-d.
\end{equation*}
Going to the limit in $k$ then gives $\gamma_{n}=\buse{p}{x}{g_{n}(x)}\geq n-d$. But the pythagorean Theorem in the flat strip spanned by $R$ and $g_n(R)$ gives (here $\delta$ is the Hausdorff distance between $R$ and $g_n(R)$):
\begin{equation}\label{eq:cond3foward}
  d(x,g_n(x))=\sqrt{\gamma_{n}^2+\delta^2}\geq\gamma_{n}\geq n-d.
\end{equation}
We also have $d(R,g_{n}^{-1}(R))=d(R,g_n(R))=\delta$ and $\buse{p}{x}{g_{n}^{-1}(x)}=-\gamma_k$ so that we have the same bound with $g_n^{-1}$ instead of $g_n$:
\begin{equation}\label{eq:cond3backward}
  d(x,g_{n}^{-1}(x))=\sqrt{\gamma_{n}^2+\delta^2}\geq\gamma_{n}\geq n-d.
\end{equation}
The conditions \ref{it:limitpt} immediately follows from \eqref{eq:cond3foward} and \eqref{eq:cond3backward} and the fact that $g_n(x)$ and $g_{n}^{-1}(x)$ remain at distance at most $d$ from $R$.
\end{proof}
We will make use of the dynamics of those series. First we will need the following very handful lemma:
\begin{lemma}\label{lem:limtitspar}
  Let $X$ be a \catz\ space with a geodesic line $R$, $y\in\bndry{X}$ and $d>0$ be arbitrary. Suppose we have a sequence of lines $R_n$ that lie at distance at most $d$ from $R$ in the Hausdorff metric (in particular they are parallel to it). Then for any sequence of reals $t_k$  such that $R_k(t_k)$ diverges to $R(+\infty)$ , the angles $\angle_{R_k(t_k)}(R(+\infty),y)$ converge to the Tits angle $\angle(R(+\infty),y)$.
\end{lemma}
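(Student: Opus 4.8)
Write $\theta:=\angle(R(+\infty),y)$. I would not try to compare the Alexandrov angles at $R_k(t_k)$ and at a nearby point of $R$ directly --- such angles change wildly under bounded motion of the basepoint --- but instead compare the Busemann function of $y$ restricted to the lines $R_k$ with its restriction to $R$; these are convex functions staying within distance $d$ of one another, and the angle re-enters only through the first variation formula.

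First the setup. Put $p_k=R_k(t_k)$. Since each $R_k$ is parallel to $R$, the flat strip theorem gives an isometric embedding $\varphi_k\colon\R\times[0,a_k]\to X$, which after a harmless reparametrisation satisfies $\varphi_k(\tau,0)=R(\tau)$ for all $\tau$, with $\varphi_k(\cdot,a_k)$ a parametrisation of $R_k$, $\varphi_k(+\infty,\cdot)=R(+\infty)$, and $a_k\le d$ (the width of a flat strip equals the Hausdorff distance between its edges). Write $p_k=\varphi_k(u_k,a_k)$; then $\varphi_k(u_k,0)=R(u_k)$ is the point of $R$ nearest to $p_k$, at distance $a_k\le d$, and a routine comparison-angle argument based at $R(0)$ shows that the hypothesis $p_k\to R(+\infty)$ forces $u_k\to+\infty$ (if $u_k$ stayed bounded, or tended to $-\infty$, along a subsequence, then $p_k$ would stay bounded, resp.\ tend to $R(-\infty)$).

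Now fix a Busemann function $b$ based at $y$. The functions $f(\tau):=b(R(\tau))$ and $h_k(\tau):=b(\varphi_k(\tau,a_k))$ are convex and $1$-Lipschitz, and since $b$ is $1$-Lipschitz and $d(\varphi_k(\tau,a_k),\varphi_k(\tau,0))=a_k$ for every $\tau$, we have $\sup_\tau|h_k(\tau)-f(\tau)|\le a_k\le d$, with $d$ independent of $k$. By the first variation formula for Busemann functions, the right derivatives are $f'_+(\tau)=-\cos\angle_{R(\tau)}(R(+\infty),y)$ and $(h_k)'_+(\tau)=-\cos\angle_{\varphi_k(\tau,a_k)}(R(+\infty),y)$ (here one uses $\varphi_k(+\infty,a_k)=R(+\infty)$, so that the forward direction of $R_k$ points at $R(+\infty)$); in particular $(h_k)'_+(u_k)=-\cos\angle_{p_k}(R(+\infty),y)$. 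On the other hand, by Part~\Roman{deux}, Proposition~9.8 of \cite{BridsonHaefliger201011} (the fact already invoked in the proof of Lemma~\ref{lem:split}), $\angle_{R(\tau)}(R(+\infty),y)$ increases to $\theta$ as $\tau\to+\infty$, hence $f'_+(\tau)\to-\cos\theta$. It now suffices to transport the asymptotic slope of $f$ to the $h_k$ along the sequence $u_k$, which is an elementary convexity fact: if $f,h$ are convex with $\|f-h\|_\infty\le d$ and $f'_+(\tau)\to\ell$ as $\tau\to+\infty$, then $h'_+(\sigma)\to\ell$ as $\sigma\to+\infty$, at a rate depending only on $d$ and on the rate for $f$ (if $h'_+(\sigma)>\ell+\delta$ then $h'_+\ge\ell+\delta$ on $[\sigma,\infty)$ while $f'_+\le\ell$ throughout, so $h-f\to+\infty$; if $h'_+(\sigma)<\ell-\delta$, comparing slopes on $[\tau_0,\sigma]$ with $\tau_0$ chosen so that $f'_+(\tau_0)\ge\ell-\delta/2$ gives $(h-f)(\sigma)-(h-f)(\tau_0)\le-(\sigma-\tau_0)\delta/2<-2d$ once $\sigma$ is large; either way $\|f-h\|_\infty\le d$ is violated). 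Applying this uniformly to the family $h=h_k$ (possible since $\|h_k-f\|_\infty\le d$ with $d$ fixed), with $\ell=-\cos\theta$, and evaluating at $\sigma=u_k\to+\infty$, yields $(h_k)'_+(u_k)\to-\cos\theta$, i.e.\ $\cos\angle_{p_k}(R(+\infty),y)\to\cos\theta$; since these angles lie in $[0,\pi]$ and $\cos$ is a homeomorphism $[0,\pi]\to[-1,1]$, we get $\angle_{R_k(t_k)}(R(+\infty),y)\to\theta$, as required.

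The point at which one might expect trouble is exactly the passage from $R$ to $R_k$: a priori the quantity we want lives at $p_k\in R_k$ whereas the quantity we control lives on $R$. The resolution above sidesteps the instability of angles by replacing ``angle at a point'' with ``slope of a convex function'', and the only residual subtlety is that the auxiliary functions $h_k$ depend on $k$ --- which is harmless because the rate at which $f'_+$ approaches its limit is governed by $f$ alone, hence is $k$-independent, and the bound $\|h_k-f\|_\infty\le d$ is uniform as well.
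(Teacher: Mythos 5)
Your proof is correct, and it takes a genuinely different route from the one in the paper. The paper argues synthetically: it fixes a reference point $x=R(t)$ far out on $R$ with $\angle_x(R(+\infty),y)\geq\angle(R(+\infty),y)-\epsilon/2$, observes that the two angles $z_1,z_2$ subtended by the flat strip between $R$ and $R_k$ at $x$ and at $R_k(t_k)$ tend to $0$ (Euclidean trigonometry in the strip), and then squeezes $\alpha=\angle_{R_k(t_k)}(R(+\infty),y)$ between the lower bound $\pi\leq\alpha+\theta+z_2$ coming from the line $R_k$ and the upper bound $\theta+\varphi\leq\pi$ for the ideal triangle $R_k(t_k)\,x\,y$. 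You instead encode the angle as the right derivative of the convex function $b_y$ restricted to the parallel lines, reduce the statement to the fact that two convex functions at uniformly bounded sup-distance have the same asymptotic slope (with a rate depending only on that bound), and evaluate at $u_k\to+\infty$. Your convexity lemma and its uniformity in $k$ are verified correctly, as is the reduction $u_k\to+\infty$. What your approach buys is a clean separation between the \catz\ input (first variation plus Part~\Roman{deux}, Proposition~9.8 of \cite{BridsonHaefliger201011}) and soft one-variable convexity, with the uniformity in $k$ completely transparent; what it costs is reliance on the \emph{exact} first variation formula for Busemann functions, $(b_y\circ c)'_+(0)=-\cos\angle_{c(0)}(c,y)$. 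Only the inequality $(b_y\circ c)'_+(0)\geq-\cos\angle_{c(0)}(c,y)$ follows immediately from comparison angles; the reverse inequality, which is the one you actually need at the points $p_k$ (since $\angle_{p_k}\leq\angle(R(+\infty),y)$ is automatic), requires passing to the limit in the first variation formula for distance functions $d(\cdot,\sigma(s))$ along the ray $\sigma$ to $y$. This interchange of limits is routine in a complete \catz\ space and the equality is standard, but it deserves a sentence; the paper's comparison-angle argument sidesteps it entirely.
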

\begin{rmk}
  The case where the $R_n$ all coincide with $R$ is classical (see Part \Roman{deux} proposition $9.8$ of \cite{BridsonHaefliger201011}) and we will make use of it to simply the proof.
\end{rmk}
\begin{proof}
  As pointed out in the above remark, we already know that the angles $\angle_{R(t)}(R(+\infty),y)$ converge to $\angle(R(+\infty),y)$ when $t$ goes to $+\infty$. Let $\epsilon>0$ be arbitrary. We now fix $t$ such that  $\angle_{R(t)}(R(+\infty),y)\geq \angle(R(+\infty),y)-\epsilon/2$ and write $x$ for $R(t)$. Let us parametrize the problem (we will omit the subscript $k$ to simplify the notations since we will only consider one $k$ at a time) and define (see figure \ref{fig:layoutangles}):
  \begin{align*}
  \alpha&=\angle_{R_k(t_k)}(R(+\infty),y),&   \theta &=\angle_{R_k(t_k)}(y,x),\\
    z_2 &=\angle_{R_k(t_k)}(x,R(-\infty)),&     z_1  &=\angle_x(R_k(t_k),R(+\infty)),\\
    \varphi&=\angle_x(R_k(t_k),y), &=\beta=\angle_{x}(R(+\infty),y).
  \end{align*}
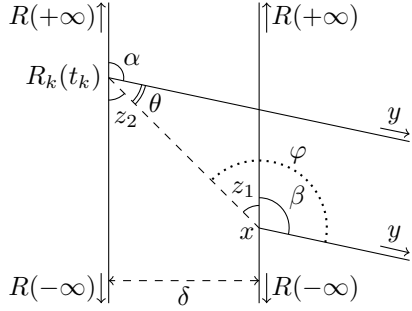
\begin{figure}[h]
  \centering
  \begin{tikzpicture}

\draw (0.00,-2.00) -- (0.00,2.00);
\draw (-2.00,-2.00) -- (-2.00,2.00);
\draw (-2.00,1.00) -- (2.00,0.15);
\draw (0.00,-1.00) -- (2.00,-1.43);
\draw (-2.00,1.20) arc (90.00:-12.00:0.20);
\draw[dashed] (-2.00,1.00) -- (0.00,-1.00);
\draw[<->,dashed] (0.00,-1.70) -- (-2.00,-1.70);
\draw (-1.56,0.91) arc (-12.00:-45.00:0.45);
\draw (-1.51,0.90) arc (-12.00:-45.00:0.50);
\draw (-1.79,0.79) arc (-45.00:-90.00:0.30);
\draw (0.39,-1.08) arc (-12.00:90.00:0.40);
\draw (0.00,-0.70) arc (90.00:135.00:0.30);
\draw[dotted,thick] (0.88,-1.19) arc (-12.00:135.00:0.90);
\draw (-1.77,0.45) node { $z_2$ };
\draw (-1.69,1.25) node { $\alpha$ };
\draw (-1.38,0.67) node { $\theta$ };
\draw (-0.21,-0.49) node { $z_1$ };
\draw (0.51,-0.59) node { $\beta$ };
\draw (-1.00,-1.90) node { $\delta$ };
\draw (-2.60,1.00) node { $R_k(t_k)$ };
\draw (-0.16,-1.07) node { $x$ };
\draw (0.52,-0.03) node { $\varphi$ };
\draw[->] (1.61,0.33) -- (2.00,0.25);
\draw (1.80,0.49) node { $y$ };
\draw[->] (1.61,-1.24) -- (2.00,-1.33);
\draw (1.80,-1.08) node { $y$ };
\draw[->] (0.10,1.60) -- (0.10,2.00);
\draw (0.75,1.80) node { $R(+\infty)$ };
\draw[->] (-2.10,1.60) -- (-2.10,2.00);
\draw (-2.75,1.80) node { $R(+\infty)$ };
\draw[->] (-2.10,-1.60) -- (-2.10,-2.00);
\draw (-2.75,-1.80) node { $R(-\infty)$ };
\draw[->] (0.10,-1.60) -- (0.10,-2.00);
\draw (0.75,-1.80) node { $R(-\infty)$ };
\end{tikzpicture}
  \caption{Layout of the angles.}
  \label{fig:layoutangles}
\end{figure}
Let $\delta\leq d$ be the Hausdorff distance between the two rays $R_k$ and $R$. Then euclidean geometry in the flat strip spanned by those two rays gives:
\begin{align*}
  \sin(z_1)=\sin(z_2)=\delta/d(R_k(t_k),x)\leq d/d(R_k(t_k),x)\xrightarrow[+\infty]{k} 0.
\end{align*}
So if we take $k$ large enough then we have $z_1,z_2\leq\epsilon/4$. We will assume this condition is satisfied in the following. But then:
\begin{equation}\label{eq:majorpi}
  \pi=\angle_{R_k(t_k)}(R(+\infty),R(-\infty))\leq \alpha+\theta+ z_2.
\end{equation}
In the infinite triangle with vertex $R_k(t_k)$, $x$ and $y$ the sum of the angles must not exceed $\pi$ so $\theta+\varphi\leq\pi$. But $\varphi\geq \beta-z_1$, so combining this with \eqref{eq:majorpi} gives:
\begin{equation*}
\theta+\beta-z_1\leq\pi\leq\alpha+\theta+z_2,
\end{equation*}
and  finally we have:
\begin{align*}
  \alpha\geq \beta-(z_1+z_2)\geq \angle(R(+\infty),y)-\epsilon/2-(\epsilon/4+\epsilon/4)\geq \angle(R(+\infty),y)-\epsilon.
\end{align*}
Since we always have $\alpha\leq\angle(R(+\infty),y)$ we just proved the lemma.
\end{proof}
The proof of the following proposition is a refinement of that of the lemma \ref{lem:split}.
\begin{prop}\label{prop:fixflat}
  Let $X$ be a \catz\ space with compact Tits topology. Suppose that a group $G$ fixes a boundary point $p\in\bndry{X}$ and acts cocompactly on $X$. Then $\can{X}$ admits a flat factor.
\end{prop}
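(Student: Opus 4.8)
As indicated by the sentence preceding the statement, the plan is to refine the proof of Lemma \ref{lem:split}, with the sequence $(g_n)$ produced by Proposition \ref{prop:veryhyp} playing the role that the iterates of a hyperbolic isometry play there, and with Lemma \ref{lem:limtitspar} supplying the convergence of comparison angles that was previously available along an axis.

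First I would reduce to the case where $X$ is itself boundary minimal, exactly as in the proofs of Propositions \ref{prop:bndrysphere} and \ref{prop:hypflat}: replace $X$ by $\can{X}$, which is boundary minimal, has the same Tits boundary (so it is again Tits compact), and on which $G$ still acts cocompactly while still fixing $p$. Then apply Proposition \ref{prop:veryhyp} to obtain a geodesic line $R$ with $R(-\infty)=p$, a real $d\geq 0$, and isometries $g_n\in G$ such that $g_n(R)$ is parallel to $R$ and at Hausdorff distance at most $d$ from it, with $g_n^{\pm}(q)\to R(\pm\infty)$ for every $q\in X$. The key preliminary remark is that parallel geodesic lines share their pair of endpoints at infinity; since $g_n$ fixes $p=R(-\infty)$, it follows that each $g_n$ fixes \emph{both} ends of $R$.

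The heart of the argument is then the following. Let $b_n$ be the restriction of $g_n$ to $\titsb{X}$; these are isometries of the compact metric space $(\titsb{X},\titsm{\cdot}{\cdot})$, so by Arzelà--Ascoli I may pass to a subsequence along which $b_n\to\beta$ uniformly, where $\beta$ is a surjective, hence bijective, isometry of $\titsb{X}$ that still fixes $R(\pm\infty)$; then $b_n^{-1}\to\beta^{-1}$ uniformly as well. Fix $y\in\titsb{X}$. Applying Lemma \ref{lem:limtitspar} to the lines $g_n(R)$ and the points $g_n(R(0))\to R(+\infty)$ gives $\angle_{g_n(R(0))}(R(+\infty),y)\to\angle(R(+\infty),y)$; pushing this forward by the isometry $g_n^{-1}$ of $X$, which fixes $R(+\infty)$, rewrites the left-hand side as $\angle_{R(0)}(R(+\infty),b_n^{-1}(y))$. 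Since $b_n^{-1}(y)\to\beta^{-1}(y)$ in the coarser cone topology, upper semicontinuity of the comparison angle $\angle_{R(0)}(R(+\infty),\cdot)$ gives $\angle(R(+\infty),y)\leq\angle_{R(0)}(R(+\infty),\beta^{-1}(y))$, and together with the universal inequality $\angle_{R(0)}\leq\angle$ and the equality $\angle(R(+\infty),\beta^{-1}(y))=\angle(R(+\infty),y)$ (since $\beta^{-1}$ is a Tits isometry fixing $R(+\infty)$) this forces $\angle_{R(0)}(R(+\infty),\beta^{-1}(y))=\angle(R(+\infty),\beta^{-1}(y))$, i.e. $R(0)$, $R(+\infty)$, $\beta^{-1}(y)$ span an infinite flat triangle. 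As $\beta^{-1}$ is onto, I conclude that $R(0)$, $R(+\infty)$, $v$ span an infinite flat triangle for \emph{every} $v\in\titsb{X}$.

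From here the proof closes exactly as in Lemma \ref{lem:split}: the parametrization of $R$ is irrelevant (shifting it affects nothing in Proposition \ref{prop:veryhyp}), so the same conclusion holds with $R(-s)$ in place of $R(0)$ for every $s\geq 0$, these flat triangles are pairwise compatible, and they glue into a totally geodesic flat half plane with edge $R$ having $v$ on its boundary. Thus every ray emanating from $R$ lies in a half flat plane with edge $R$, so $\para{R}$ is a closed convex subset with full boundary; by minimality $\para{R}=X$, and by the flat strip theorem $\para{R}$, hence $X=\can{X}$, splits as $\R\times Z$. I expect the only genuine obstacle to be the third paragraph: because the points $g_n(R(0))$ escape to $R(+\infty)$ along \emph{varying} lines $g_n(R)$, one cannot invoke the classical convergence of comparison angles along a fixed ray and must instead use Lemma \ref{lem:limtitspar}; and one must make sure the limiting boundary isometry $\beta$ fixes $R(+\infty)$ — this is precisely why it matters that $g_n$ fixes both ends of $R$ and not merely $p$ — so that the limiting angle equality can be transported back to the base point $R(0)$ and $v$ can be made to range over all of $\titsb{X}$.
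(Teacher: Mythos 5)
Your argument is correct and follows essentially the same route as the paper's proof: reduce to the boundary minimal case, extract a limiting Tits isometry of the compact boundary via Arzelà--Ascoli, use Lemma \ref{lem:limtitspar} along the moving lines $g_n(R)$, transport by $g_n^{-1}$, and conclude by surjectivity and the gluing of flat triangles as in Lemma \ref{lem:split}. If anything, you are slightly more careful than the paper in noting that each $g_n$ fixes both ends of $R$ and in invoking upper semicontinuity of the angle rather than asserting its continuity under cone-topology convergence.
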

\begin{proof}
 We can always assume that $X$ is boundary minimal. Fix a sequence of isometries $g_n$ and a ray $R$ as given by the proposition \ref{prop:veryhyp} above. Let also $x$ lie on $R$. Passing to a subsequence if necessary, we can assume that the isometries ${g_k}_{|\bndry{X}}^{|\bndry{X}}$ converge uniformly to a distance preserving map $b:\titsb{X}\rightarrow\titsb{X}$. Of course, $b$ fixes $R(+\infty)$. Fix $y\in\bndry{X}$ arbitrary. Then according to the lemma \ref{lem:limtitspar} above we have:
 \begin{equation*}
   \lim_{k\mapsto+\infty}\angle_{g_k(x)}(R(+\infty),y)=\angle(R(+\infty),y).
 \end{equation*}
If we push everything foward in the left hand side by $g_{k}^{-1}$ then we obtain:
\begin{equation}\label{eq:eq1}
   \lim_{k\mapsto+\infty}\angle_{x}(R(+\infty),g_{k}^{-1}(y))=\angle(R(+\infty),y).  
\end{equation}
But we obviously have:
\begin{equation}\label{eq:eq2}
   \lim_{k\mapsto+\infty}\angle_{x}(R(+\infty),g_{k}^{-1}(y))=\angle_{x}(R(+\infty),b(y)),
\end{equation}
and since $b$ fixes $R(+\infty)$ we also have:
 \begin{equation}\label{eq:eq3}
   \angle(R(+\infty),y)=\angle(R(+\infty),b(y)).
 \end{equation}
Combining \eqref{eq:eq1}, \eqref{eq:eq2} and \eqref{eq:eq3} we get:
\begin{equation*}
  \forall y\in\titsb{X},\ \angle_{x}(R(+\infty),b(y))=\angle(R(+\infty),b(y)).
\end{equation*}
Now $b$ is surjective since every distance preserving map of a compact set is, so finally we obtain:
\begin{equation}
  \forall z\in\bndry{X},\ \angle_{x}(R(+\infty),z).=\angle(R(+\infty),z).
\end{equation}
The end of the proof is word for word that of the proposition \ref{prop:hypflat}.
\end{proof}
\begin{rmk}
  The author first thought that the $\pi$-convergence property failed here because the $g_n$ constructed in the proposition \ref{prop:veryhyp} need not act properly on $X$. However the author recently realised that the properness hypotheses in the lemma $8$ of \cite{papasoglu2009boundaries} is not used in the proof! This observation yields another proof (see the remark \ref{rmk:piconv}).
\end{rmk}
\section{Geodesically complete Tits compact cocompact spaces are flat}\label{sec:geodcomp}

We shall now use our results to prove the following:
\begin{prop}\label{prop:geodcompflat}
  Every geodesically complete cocompact and Tits compact space is flat. 
\end{prop}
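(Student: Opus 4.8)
The plan is to split off the maximal euclidean factor of $X$ and feed what remains into the dichotomy ``admits a hyperbolic isometry'' versus ``has a boundary point fixed by the full isometry group'', the two branches of which are exactly what Propositions~\ref{prop:hypflat} and~\ref{prop:fixflat} treat; the one genuinely new input, namely that one of the branches must occur, will come from the structure theory of Caprace and Monod.

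First I would record the standing reductions. Since $X$ is geodesically complete and carries a cocompact isometry group, the results of Caprace and Monod give that $\isom{X}$ acts minimally on $X$; as $\can{X}$ is a closed convex $\isom{X}$-invariant subset this forces $\can{X}=X$, so $X$ is boundary minimal and the proofs of Propositions~\ref{prop:hypflat} and~\ref{prop:fixflat} apply to it verbatim. Write $X=\R^{k}\times Y$ with $\R^{k}$ the maximal euclidean factor, so $Y$ has no euclidean factor. This splitting is canonical, hence $\isom{X}$-invariant, so $Y$ again carries a cocompact isometry group; a geodesic segment of $Y$, read inside $X$, extends to a geodesic line whose $\R^{k}$-component has zero speed and therefore stays in $Y$, so $Y$ is geodesically complete; $Y$ is boundary minimal, since a closed convex subset of $Y$ with full boundary would, after multiplication by $\R^{k}$, have full boundary in $X$ and hence, $X$ being boundary minimal, equal $X$; and, as in the proof of Proposition~\ref{prop:bndrysphere}, $\titsb{Y}$ is the set of points of $\titsb{X}$ at constant Tits distance $\pi/2$ from the subsphere $\titsb{\R^{k}}$, a closed subset of the compact space $\titsb{X}$, so $Y$ is Tits compact. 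If $X$ is flat we are done, so assume $Y$ is not a point.

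Now I invoke Caprace and Monod: a geodesically complete proper \catz\ space carrying a cocompact isometry group, having no euclidean factor and non-empty boundary at infinity, either admits a hyperbolic isometry or has a boundary point fixed by its whole isometry group. Applied to $Y$: in the first case Proposition~\ref{prop:hypflat}, whose hypotheses ($Y$ cocompact and Tits compact) hold, produces a non-trivial euclidean factor of $Y=\can{Y}$, contradicting the choice of $Y$; in the second case Proposition~\ref{prop:fixflat}, with $Y$ Tits compact and a cocompact group fixing a boundary point, does the same. Hence $Y$ is a point and $X=\R^{k}$ is flat.

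Everything apart from the Caprace--Monod step is bookkeeping: that $Y$ inherits the four hypotheses, and the remark that, through Propositions~\ref{prop:hypflat} and~\ref{prop:fixflat}, merely exhibiting a hyperbolic isometry or a global fixed point at infinity already contradicts the absence of a euclidean factor. The real obstacle, and the reason ``recent results of Caprace and Monod'' are cited, is the dichotomy itself --- that a non-trivial geodesically complete cocompact \catz\ space with no euclidean factor must have a hyperbolic isometry or a fixed boundary point. This is where their structure theorem is essential, the rank one theorem for its totally disconnected factors taking on the role that rank rigidity plays in the Hadamard manifold case: in effect, such a space decomposes as a symmetric space of non-compact type times irreducible pieces with totally disconnected isometry group, every non-trivial piece carries a hyperbolic isometry, and if none do one is left with an amenable isometry group, which fixes a boundary point. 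One could instead split off one $\R$-factor at a time via Propositions~\ref{prop:hypflat} and~\ref{prop:fixflat}; the recursion then terminates because the geometric dimension of $\titsb{X}$ strictly drops at each stage.
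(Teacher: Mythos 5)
Your proposal is correct and follows essentially the same route as the paper: split off the maximal euclidean factor $X=\R^{k}\times Y$, apply the Caprace--Monod dichotomy (Theorem~\ref{thm:capmon}) to $Y$, and use Propositions~\ref{prop:hypflat} and~\ref{prop:fixflat} to contradict the absence of a euclidean factor in $Y$ unless $Y$ is a point. You merely spell out the bookkeeping (that $Y$ inherits cocompactness, geodesic completeness, minimality and Tits compactness) that the paper leaves implicit.
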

\begin{rmk}
  We suspect that the geodesic completeness is unnecessary (in section \ref{sec:geomact} above and in its improvement section \ref{sec:geomactbetter} bellow this hypothesis is exchanged with another one). The other hypotheses however cannot be dropped. 
\end{rmk}
To prove this proposition we will make use of the following result of \cite{caprace2009isometry} (the result in their is actually a little better):
\begin{thm}\label{thm:capmon}
  A geodesically complete cocompact \catz\ space $X$ not reduced to a singleton admits hyperbolic isometries or admits a cocompact action by a group of isometries fixing a point at infinity.
\end{thm}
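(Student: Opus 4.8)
The plan is to push the whole isometry group $G=\isom{X}$ through the canonical splitting theory, reducing to a single irreducible non-Euclidean factor, where a structural dichotomy yields either a hyperbolic isometry or a $G$-fixed boundary point. Throughout, $X$ is proper (being cocompact and geodesically complete in our setting), so $G$ is a locally compact group acting cocompactly; the goal is to show that either $G$ contains a hyperbolic isometry or $G$ itself fixes a point of \bndry{X}, the latter providing the required cocompact action fixing a boundary point.

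First I would dispose of the Euclidean direction. Let $X=\R^n\times X'$ be the canonical (de Rham type) splitting off of the maximal Euclidean factor; it is preserved by a finite index subgroup of $G$. If $n\geq 1$, then any nontrivial translation of the $\R^n$ factor is a Clifford translation of $X$: it translates every line $\R\times\{q\}$, hence is hyperbolic, and we land in the first alternative. So I may assume $X$ has no Euclidean factor. Next I would reduce to the irreducible case. Decompose the non-Euclidean space as a product $X=X_1\times\cdots\times X_p$ of irreducible factors; this splitting is canonical, each $X_i$ is again geodesically complete, cocompact and not a point, and a finite index subgroup of $G$ preserves it and acts as $\prod_i\isom{X_i}$. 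The two alternatives are inherited from the factors: a hyperbolic isometry of some $X_i$, extended by the identity on the remaining factors, is hyperbolic on $X$; conversely, if every $\isom{X_i}$ fixes a point $p_i\in\bndry{X_i}$, then $\prod_i\isom{X_i}$ fixes $p_1$, regarded as the endpoint of a ray running to infinity purely along the $X_1$ factor, and this finite index subgroup still acts cocompactly. Hence it suffices to establish the dichotomy for a single irreducible non-Euclidean factor, which I rename $X$.

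The heart of the matter is this irreducible case, and here I would invoke the Caprace--Monod structure theorem for isometry groups of irreducible geodesically complete cocompact \catz\ spaces: $\isom{X}$ is either a connected, almost simple Lie group, with $X$ the associated Riemannian symmetric space of noncompact type, or else it is totally disconnected. In the Lie case the one parameter groups of transvections translate geodesics, so $X$ carries hyperbolic isometries and we are in the first alternative. The totally disconnected case is the main obstacle. Here no Lie-theoretic transvections are available, so I would argue that, in the absence of a hyperbolic isometry, every element of $G$ is elliptic or parabolic and the situation is governed by the dynamics on the compact boundary \bndry{X}, on whose Tits refinement \titsb{X} (a \catone\ space) $G$ acts by isometries. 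The model is the action on a tree: a group acting on a tree with no hyperbolic element fixes a vertex or an end, and fixing a vertex forces bounded orbits, which a cocompact action on an unbounded space cannot have, so a fixed end must appear.

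The genuine work, carried out by Caprace and Monod, is to upgrade this tree dichotomy to the general totally disconnected setting. The key difficulty I would flag is the following: distinct parabolic elements of $G$ fix boundary points individually, but a priori these fixed points may be incompatible, with no point of \bndry{X} fixed by all of $G$. Producing a single $G$-fixed point of \bndry{X} — the analogue of a common fixed end — is exactly where geodesic completeness and the full strength of their structure theory are essential, and this is the step I expect to be the crux of the argument.
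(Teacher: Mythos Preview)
The paper does not supply its own proof of this theorem: it is quoted as a result of Caprace and Monod \cite{caprace2009isometry}, accompanied only by the remark that the argument is involved and rests on the structure theory of locally compact groups with trivial amenable radical and of totally disconnected locally compact groups, in particular Willis's refinement of van Dantzig's theorem on compact open subgroups. Your outline is a faithful high-level sketch of exactly that strategy --- split off the Euclidean de~Rham factor, reduce to irreducible factors, apply the Lie/totally disconnected dichotomy for the isometry group, and isolate the totally disconnected case as the hard one --- and you honestly flag that last step as the crux and defer to Caprace--Monod there, which is precisely what the paper does as well.

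One sharpening worth noting: the paper singles out (and later, in the section on semi-simple actions, actually re-runs) the specific mechanism in the totally disconnected case, namely Proposition~6.8 and Corollary~6.10 of \cite{caprace2009isometry}, where Willis's theorem is used to trap stabilisers inside compact open subgroups and thereby force either a hyperbolic element or a global fixed point on the boundary. Your tree analogy (``no hyperbolic element $\Rightarrow$ fixed end'') is the right intuition, but the actual argument is not a direct generalisation of Serre's lemma; it goes through the angle estimate of Proposition~6.8 and the compact-open-subgroup structure, which is why the paper emphasises Willis and van~Dantzig rather than tree dynamics.
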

The proof of the above theorem is rather evolved since it makes use of the structure of locally compact groups with trivial amenable radical and that of totally disconnected locally compact groups (indeed it even makes use of the fact due to G. Willis and published in $1994$, \cite{willis1994structure}, that a compact subgroup of a locally compact totally disconnected group lies in a compact open subgroup. The existence of compact open subgroups, due to van Dantzig, was proved in $1936$ in \cite{dantzig1936zurtop}). 
\begin{rmk}\label{rmk:capmon}
  The proof of the proposition $6.8$ in \cite{caprace2009isometry} lacks some details and we take the opportunity to warn the reader. The inequality therein $d(gy, y)\leq d(cg,x , y)\angle_{c_{g,x}}(gx, x)$ does not hold for any $y\in[c_{g,x},x]$ like written but for $y$ between $c_{g,x}$ and $x$ close enough to $c_{g,x}$ as soon as the angle $\angle_{c_{g,x}}(gx, x)$ is not zero. If it is zero, then we have $d(gy, y)\leq d(cg,x , y)/2n$ for $y\in[c_{g,x},x]$ close enough to $c_{g,x}$ so the rest of the argument works equally well in this case. We wish to thank P.E. Caprace for his quick answer concerning this point.
\end{rmk}
\begin{proof}[Proof of the proposition \ref{prop:geodcompflat}]
  Once again we decompose $X$ as $\R^k\times Y$ where $Y$ admits no euclidean factor. We want to show that $Y$ is reduced to a point. Suppose not. Then according to the theorem \ref{thm:capmon} above $Y$ either admits a hyperbolic isometry or a cocompact action by a group of isometries fixing a point at infinity. In either case it must admit a flat factor (remark that a geodesically complete cocompact \catz\ space is boundary minimal), despite its definition. So $Y$ is reduced to a point and $X$ is flat.
\end{proof}
\section{Proper semi-simple actions with locally finite stabilisers}\label{sec:geomactbetter}

We use the argument of P.E. Caprace and N. Monod in order to replace the geodesic completness hypothesis in the proposition \ref{prop:geodcompflat} by the existence of a proper semi-simple actions with locally finite stabilisers (definitions below). In this section, we say say that a topological group $G$ acting continuously on a metric space $(X,d)$ acts properly if for every closed ball $B$ in $X$, the set of all $g\in G$ satisfying $g(B)\cap B\neq\empty$ is compact in $G$ (this coincide with the previous definition when $G$ has a discrete topology). With say that $G$ acts with locally finite stabilisers if for every $x\in X$ and $r>0$, the stabiliser $G_{x}$ acts as a finite group of transformation on \ball{x}{r} (i.e. the quotient of $G_x$ by the pointwise fixator of \ball{x}{r} is finite). Then:
\begin{prop}
  Let $G\subset\isom{X}$ acts properly and cocompactly by semi-simple isometries with locally finite stabilisers on a \catz\ space without fixing a point on the boundary. Then it admits a hyperbolic isometry.
\end{prop}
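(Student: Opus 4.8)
The plan is to argue by contradiction: suppose $G$ contains no hyperbolic isometry. Since $G$ acts by semi-simple isometries, every element of $G$ is then elliptic, and it suffices to produce a point of $\bndry{X}$ fixed by all of $G$. We may assume $X$ is not reduced to a point (equivalently $\bndry{X}\neq\emptyset$), this being the only case of interest, and, replacing $X$ by its canonical boundary minimal subspace $\can{X}$ --- which is $\isom{X}$-invariant, carries the same boundary with the same Tits metric, and on which $G$ still acts properly and cocompactly by elliptic isometries with locally finite stabilisers --- we may assume in addition that $X$ is boundary minimal. For the topological setup we follow \cite{caprace2009isometry}: since $X$ is proper and cocompact, $\isom{X}$ is a second countable locally compact group acting continuously, properly and cocompactly on $X$, so replacing $G$ by its faithful image in $\isom{X}$ (a quotient by the compact normal subgroup of elements acting trivially, which changes nothing) and noting that the resulting monomorphism into $\isom{X}$ is proper, hence a closed embedding, we may assume $G$ is a closed --- thus locally compact --- subgroup of $\isom{X}$ acting properly and cocompactly, with compact point stabilisers. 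Now the locally finite stabiliser hypothesis becomes decisive: for every $x$ and every $r>0$ the pointwise fixator of $\ball{x}{r}$ in $G_{x}$ is an open finite-index subgroup, these fixators form a neighbourhood basis of the identity of the compact group $G_{x}$, and their intersection is trivial; hence every point stabiliser of $G$ is profinite. This profiniteness is precisely the input that will play, in the argument below, the role that geodesic completeness plays in Theorem~\ref{thm:capmon}, and that brings the theorems of van Dantzig \cite{dantzig1936zurtop} and Willis \cite{willis1994structure} within reach.

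One now runs the structure-theoretic argument of Caprace and Monod directly on $G$. Let $R\trianglelefteq G$ be the amenable radical. By the Adams--Ballmann theorem (in the form suited to the normal subgroup $R$), $R$ either fixes a point of $\bndry{X}$ --- which, $R$ being normal, yields a non-empty $G$-invariant closed convex subset of the Tits boundary, out of which a $G$-fixed point at infinity is extracted, contradicting the hypothesis --- or $R$ stabilises a flat $F\subseteq X$; if $\dim F>0$ then $R$ either translates along a line of $F$, producing a hyperbolic element of $G$ against the standing assumption, or it does not, in which case (as when $\dim F=0$) boundary minimality together with faithfulness forces $R=\{1\}$. So $G$ has trivial amenable radical. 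Consequently its identity component $G^{\circ}$ is a centre-free semi-simple Lie group without compact factor (every solvable, central or compact normal piece being amenable and normal, hence trivial); were it non-trivial it would, by the recognition results of \cite{caprace2009isometry}, force $X$ to contain a factor isometric to its symmetric space and thereby a loxodromic --- in particular hyperbolic --- element of $G$, which is impossible. Hence $G^{\circ}=\{1\}$ and $G$ is totally disconnected. It remains to rule out a non-trivial totally disconnected locally compact group with trivial amenable radical acting properly, cocompactly and minimally on $X$ without hyperbolic isometries: van Dantzig furnishes a compact open subgroup $U$, which fixes a point; Willis' theorem that every compact subgroup lies in a compact open one controls how the fixed-point sets of the conjugates of $U$ fit together, and one concludes --- as in \cite{caprace2009isometry} --- with either a hyperbolic element or a global fixed point at infinity, both excluded. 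This contradiction proves the proposition.

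The step I expect to be the main obstacle is the faithful transposition of this structure-theoretic argument from the ``geodesically complete'' hypothesis of \cite{caprace2009isometry} to ours: one must verify that passing to $\can{X}$ truly suffices everywhere minimality was used there, that extracting a symmetric-space factor from a semi-simple Lie subgroup of $G$ does not covertly rely on geodesic completeness, and --- most delicately --- that the concluding totally disconnected step, where van Dantzig's and Willis' theorems intervene, survives with profiniteness of the point stabilisers of $G$ in the place of geodesic completeness. The remaining ingredients are routine: the reductions above, the invariance of the amenable radical under passage to finite-index subgroups and to quotients by compact normal subgroups, the behaviour of elliptic, hyperbolic and parabolic isometries under products, and the passage from a fixed point at infinity for a normal subgroup (or a flat-stabilising subgroup) to one for $G$ --- handling Tits radii $\geq\pi/2$ by geodesic lines, the flat strip theorem and boundary minimality exactly as in the proof of Proposition~\ref{prop:bndrysphere}.
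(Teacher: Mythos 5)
Your overall strategy is the same as the paper's: run the structure-theoretic argument of Corollary $6.10$ of \cite{caprace2009isometry} (trivial amenable radical, trivial identity component, hence $G$ totally disconnected) and then handle the totally disconnected case. But the one step you explicitly flag as ``the main obstacle'' --- verifying that the concluding totally disconnected step survives when geodesic completeness is replaced by local finiteness of stabilisers --- is precisely the \emph{only} step the paper actually has to prove, and your proposal does not supply it. Everything else in your write-up is a restatement of the Caprace--Monod machinery; the sentence ``Willis' theorem \dots controls how the fixed-point sets of the conjugates of $U$ fit together, and one concludes as in \cite{caprace2009isometry}'' is not an argument, because the place where \cite{caprace2009isometry} uses geodesic completeness is their Proposition $6.8$ (smoothness of the action of a totally disconnected group, i.e.\ openness of pointwise fixators of balls), and that is exactly what fails to be available here. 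So as written the proof has a genuine gap at its critical point.

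What is missing is the following substitute for Proposition $6.8$. Suppose it fails: by cocompactness one extracts elliptic isometries $g_n\to g$ in $G$, points $x_n$ with $g_n x_n\neq x_n$ and $d(c_{g_n,x_n},x_n)\le 1$, such that $c_{g_n,x_n}\to c$ and $\angle_{c_{g_n,x_n}}(g_n x_n,x_n)\to 0$. The limit $g$ fixes $c$, so $G_c$ is compact, and by Willis' refinement of van Dantzig \cite{willis1994structure} the compact subgroup $G_c$ is contained in a compact \emph{open} subgroup; being compact, that subgroup fixes a point $\hat c$, so $G_c\subset G_{\hat c}$ with $G_{\hat c}$ open in $G$, and therefore $g_n\in G_{\hat c}$ for $n$ large. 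Now the locally finite stabiliser hypothesis enters: $G_{\hat c}$ acts as a \emph{finite} group on $\ball{\hat c}{2+d(\hat c,c)}$, so after passing to a subsequence all the $g_n$ induce one and the same isometry of this ball, of some finite order $m$, and the elementary estimate ``an elliptic isometry of order $\le m$ satisfies $\angle_{c_{g,x}}(gx,x)\ge 1/m$'' yields $\angle_{c_{g_n,x_n}}(g_n x_n,x_n)\ge 1/m$, contradicting the convergence to $0$. Note also that your earlier observation that the stabilisers are profinite is not what is used; what is used is that Willis' theorem (not merely van Dantzig's) places $G_c$ inside a compact open subgroup, combined with the finiteness of the action of that subgroup on a fixed ball.
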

\begin{proof}
  It is essentially that of the corollary $6.10$ in \cite{caprace2009isometry} (we only replace the use of van Dantzig' s theorem with the refinement of G. Willis). The only case that might cause problems is when $G$ is totally disconnected and we shall assume henceforth that it is. We only need to show that the proposition $6.8$ in there still holds. This is not clear at first since it is there made use of the smoothness of the action. We use the notations in the proof of the proposition $6.8$. Obviously the following still holds: if an isometry $g$ of any complete \catz\ space $B$ has order not bigger than $n$, then $\angle_{c_{g,x}}(gx,x)\geq 1/n$ (see also the remark \ref{rmk:capmon} above).

Like in the paper of P.E. Caprace and N. Monod, let us suppose the contrary. By cocompactness we obtain a sequence of isometries $g_n$ of $X$ that converge to an isometry $g$ and for each integer $n$ a point $x_n$ not fixed by $g_n$ such that $c_{g_n,x_n}$ converge to some $c\in X$ and $\angle_{c_{g_n,x_n}}(g_n x_n, x_n)\rightarrow 0$. Since the angle $\angle_{c_{g_n,x_n}}(g_n x_n, x_n)$ depends only on the geodesic segment $[c_{g_n,x_n},x_n]$ we can further assume that $d(c_{g_n,x_n},x_n)\leq 1$. Of course, $g$ fixes $c$, i.e. $g\in G_{c}$ and $G_c$ is compact. By Willis' theorem on the structure of totally disconnected locally compact groups in \cite{willis1994structure}, $G_c$ is contained in a compact open subgroup of $G$ : there exists a point $\hat{c}\in X$ such that $G_{\hat{c}}$ is open in $G$ and $G_c\subset G_{\hat{c}}$. In particular for $n$ large enough $g_n$ fixes $\hat{c}$. Let $d=2+d(\hat{c},c)$. By assumption the group $G_{\hat{c}}$ acts as a finite group of isometries on \ball{\hat{c}}{d} so up to taking a subsequence we can assume all the $g_n$ coincide on this ball (we write $g$ for this restriction) and have finite order $m$ (since $g$ lies in a finite group). Hence $\angle_{c_{g_n,x_n}}(g_n x_n, x_n)=\angle_{c}(g x_n, x_n)\geq 1/m$. This is the desired contradiction.
\end{proof}
\begin{rmk}
  If all the stabilisers are finite (and not merely locally finite) then the proof is easier. Indeed $G$ admits a compact open subgroup $K$ by the structure of totally disconnected locally compact group. But by compactness such a group lies in the stabilizator of a point so must be finite by hypothesis, hence discrete. We just found an open discrete subgroup of $G$: $G$ must itself be discrete. But then Swenson's result (see Theorem $11$ in \cite{swenson1999cut}), whose proof only uses basic \catz\ geometry, applies.
\end{rmk}
The same proof as the one of the proposition \ref{prop:geodcompflat} (after restricting ourselves to the canonical boundary minimal subspace) then gives:
\begin{prop}\label{prop:semisimlocalfinflat}
  Let $X$ be a Tits compact \catz\ space and suppose a topological group $G$ acts properly and cocompactly by semi-simple isometries with locally finite stabilisers on $X$. Then \can{X} is flat, or equivalently \titsb{X} is a unit metric sphere.
\end{prop}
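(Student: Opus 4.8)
The plan is to repeat the proof of Proposition~\ref{prop:geodcompflat}, with the proposition above in place of Theorem~\ref{thm:capmon}, after first replacing $X$ by its canonical boundary minimal subspace. Since $\can{X}$ is a closed convex subset fixed by every isometry of $X$, it is $G$-invariant, the nearest-point projection $\pi\colon X\to\can{X}$ is $G$-equivariant and $1$-Lipschitz, and a closed ball of $\can{X}$ is the intersection with $\can{X}$ of a closed ball of $X$. From this it is immediate that $G$ acts on $\can{X}$ properly (the relevant subsets of $G$ only shrink), cocompactly (push a compact fundamental domain through $\pi$), by semi-simple isometries (the restriction of an isometry to an invariant convex subset has the same type), and with locally finite stabilisers. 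Moreover $\titsb{\can{X}}=\titsb{X}$ because $\can{X}$ is convex with full boundary, so $\can{X}$ is again Tits compact, and by Proposition~\ref{prop:bndrysphere} it is enough to prove that $\can{X}$ is flat. So assume from now on that $X=\can{X}$ is boundary minimal.

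Decompose $X=\R^k\times Y$ where $Y$ has no euclidean factor. This splitting is canonical, hence preserved by every isometry of $X$, so $G$ projects to a group $G_Y\subset\isom{Y}$. I claim that, after passing to a finite index subgroup $G_Y^{\circ}$ of $G_Y$, this group acts properly and cocompactly on $Y$ by semi-simple isometries with locally finite stabilisers. Cocompactness (already for $G_Y$) comes from pushing a fundamental domain along the equivariant $1$-Lipschitz projection $X\to Y$; semi-simplicity holds because for a product the minimal displacement set of an isometry is the product of those of its two components, so each component of a semi-simple isometry is semi-simple; and properness together with local finiteness of stabilisers follow from the structure theory of proper cocompact actions developed by Caprace and Monod (\cite{caprace2009isometry}): the euclidean factor splits off the acting group up to finite index, so that a finite index subgroup of $G$ is a direct product $G_{\R}\times G_Y^{\circ}$ with $G_{\R}$ acting properly and cocompactly on $\R^k$ and $G_Y^{\circ}$ acting properly and cocompactly on $Y$, the latter still by semi-simple isometries and with locally finite stabilisers by the same arguments. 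Finally $Y$ is itself boundary minimal (a factor of a boundary minimal product is boundary minimal), Tits compact ($\titsb{Y}$ is closed in the compact $\titsb{X}$), and proper.

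It remains to show that $Y$ is reduced to a point; suppose not. If $G_Y^{\circ}$ fixes a point of $\bndry{Y}$, then Proposition~\ref{prop:fixflat} applies and shows that $\can{Y}=Y$ admits a flat factor. Otherwise the proposition above provides a hyperbolic isometry of $Y$ lying in $G_Y^{\circ}$, and then Proposition~\ref{prop:hypflat} shows that $\can{Y}=Y$ splits with a euclidean factor. In both cases that euclidean factor is nontrivial, being the set of lines parallel to a genuine geodesic line --- the axis of the hyperbolic isometry, respectively the line $R$ produced by Proposition~\ref{prop:veryhyp} --- which splits off a genuine $\R$. This contradicts the choice of $Y$. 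Hence $Y$ is a point, $X=\can{X}=\R^k$ is flat, and by Proposition~\ref{prop:bndrysphere} the Tits boundary $\titsb{X}$ is a metric unit sphere.

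The main obstacle is the descent to the non-euclidean factor $Y$. Replacing $X$ by $\can{X}$ is routine, but properness of an action on a product does not in general restrict to properness on a factor, so transporting properness and local finiteness of stabilisers from $\R^k\times Y$ down to $Y$ genuinely needs the Caprace--Monod structure theory, namely that the euclidean factor splits off the acting group up to finite index; this is the one point where the argument is more than a word for word repetition of the proof of Proposition~\ref{prop:geodcompflat}.
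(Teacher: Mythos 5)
Your proof is correct and follows the paper's own route exactly: the paper's proof of Proposition~\ref{prop:semisimlocalfinflat} is literally the one-line remark that the argument of Proposition~\ref{prop:geodcompflat} goes through after restricting to \can{X}, with the hyperbolic-isometry proposition of this section replacing Theorem~\ref{thm:capmon}, which is precisely your skeleton. The only place you go beyond the paper is the descent of properness, semi-simplicity and local finiteness of stabilisers from $\R^k\times Y$ to the non-euclidean factor $Y$ via the Caprace--Monod virtual splitting of the acting group along the euclidean factor; this is a real point that the paper's one-line proof silently glosses over (the fixed-point-at-infinity branch needs only cocompactness of the projected group, but the hyperbolic branch genuinely needs the full hypotheses on $Y$), and your treatment of it is sound.
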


\section{On the existence of hyperbolic isometries}

We include two results than show that the difficulty in finding hyperbolic isometries in a cocompact \catz\ space $X$ is related to this existence of flats in $X$. In particular a cocompact \catz\ hyperbolic space always admits hyperbolic isometries. We begin by the easiest one:
\begin{prop}
  Let $G$ be a proper group of isometries of a \catz\ space $X$ acting cocompactly and fixing a point $p$ on the boundary. Then either $G$ possesses hyperbolic elements or there exists a flat in $X$ with $p$ as boundary point.
\end{prop}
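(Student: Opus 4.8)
The plan is to prove the dichotomy by showing: if $G$ has no hyperbolic isometry, then $X$ contains a flat half-plane admitting $p$ as a boundary point. The only serious input is Proposition~\ref{prop:veryhyp}; in particular neither Tits compactness nor the flat-building technique of Lemma~\ref{lem:split} will be needed.

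So I would assume $G$ has no hyperbolic isometry and apply Proposition~\ref{prop:veryhyp} to the cocompact action of $G$ fixing $p$: this produces a sequence $(g_n)_{n\in\N}$ in $G$, a geodesic line $R$ with $R(-\infty)=p$, and a $d\geq 0$ such that each $g_n(R)$ is parallel to $R$ at Hausdorff distance at most $d$. Since $\gamma_n:=\buse{p}{R(0)}{g_n(R(0))}\to+\infty$ (as shown in the proof of that proposition), I may fix an index with $\gamma:=\gamma_n>0$ and write $g:=g_n$. As $g(R)$ is parallel to $R$, the two lines bound a flat strip and hence have the same pair of endpoints at infinity; since $g$ fixes $p=R(-\infty)$, this forces $g(\xi)=\xi$ where $\xi:=R(+\infty)$. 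Moreover $g$ is not elliptic — an elliptic isometry fixing $q$ has Busemann character $b(q)-b(g^{-1}q)=0$ at $p$, contradicting $\gamma>0$ — and not hyperbolic by hypothesis, so $g$ is parabolic.

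Next I would pass to the parallel set $\para{R}$ of $R$. Because $g(R)$ is parallel to $R$ we have $g(\para{R})=\para{R}$, and the flat strip theorem gives $\para{R}=\R\times Z$ with the first factor the $R$-direction — so $R=\R\times\{z_0\}$ — and $Z$ a proper Hadamard space. Since $g$ fixes both ends $p,\xi$ of the $\R$-factor it preserves this product splitting and acts as $g|_{\para{R}}=\tau_\gamma\times\phi$, where $\tau_\gamma$ is translation by $\gamma$ on $\R$ and $\phi\in\isom{Z}$. The displacement function of a product isometry is the euclidean combination of the displacement functions of the factors, so its infimum is attained iff both are; as $\tau_\gamma$ is hyperbolic, $\tau_\gamma\times\phi$ is parabolic iff $\phi$ is. Hence $\phi$ is parabolic, so it has no fixed point; but an isometry of a bounded complete $\catz$ space is elliptic, fixing the circumcenter of any orbit, so $Z$ must be unbounded. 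Being proper, complete and unbounded, $Z$ contains a geodesic ray $\sigma$ with $\sigma(0)=z_0$, and then $F:=\R\times\sigma([0,+\infty))\subseteq\para{R}\subseteq X$ is a convex subset isometric to a euclidean half-plane; since $R=\R\times\{z_0\}\subseteq F$, the point $p=R(-\infty)$ lies on $\coneb{F}$. Thus $F$ is the desired flat with $p$ as a boundary point.

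The hard part — indeed the only step carrying content beyond bookkeeping — is the product decomposition $g|_{\para{R}}=\tau_\gamma\times\phi$ together with the type analysis it feeds. One must check that $g$ fixing both endpoints of $R$ genuinely forces it to preserve the canonical $\R\times Z$-splitting of $\para{R}$, including in the case where $Z$ itself carries a euclidean de Rham factor (which a priori would let isometries rotate the $R$-direction into it), and that "$\tau_\gamma\times\phi$ is parabolic iff $\phi$ is parabolic" once $\tau_\gamma$ is hyperbolic; both are standard facts about isometries of products, but they are where the argument really rests.
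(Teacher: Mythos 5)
Your route is genuinely different from the paper's and most of it is sound: instead of the circumcenter/Busemann-character argument, you extract a single $g=g_n$ from Proposition~\ref{prop:veryhyp} with positive Busemann character at $p$, observe it must be parabolic, and read off from the splitting $g|_{\para{R}}=\tau_\gamma\times\phi$ that $\phi$ is parabolic, hence that the transverse factor $Z$ is unbounded. The two technical points you flag (that fixing both endpoints of $R$ forces preservation of the $\R\times Z$ splitting, since the $\R$-fibres are exactly the lines in $\para{R}$ asymptotic to $\{R(+\infty),R(-\infty)\}$; and that $\tau_\gamma\times\phi$ is semi-simple iff $\phi$ is, by looking at where the displacement function $\sqrt{\gamma^2+d(z,\phi z)^2}$ attains its infimum) are both correct and standard. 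This is arguably cleaner than the paper's treatment of the ``hyperbolic'' horn of the dichotomy, and it even shows more, namely that the induced isometry of $Z$ is parabolic rather than merely fixed-point-free.

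However, there is a genuine gap at the very end: you deliver a flat \emph{half}-plane $\R\times\sigma([0,+\infty))$, whereas the proposition asserts the existence of a flat, which in this paper means a convex subset isometric to all of $\R^2$ (see the definitions, and note that the paper's own proof produces $L\times R$ with $L$ a full geodesic \emph{line}; the final section likewise upgrades half-strips to whole flat planes before calling them flats). From ``$Z$ is proper, complete and unbounded'' you only get a ray in $Z$, not a line, so your $F$ is only half of what is claimed. The missing ingredient is precisely the paper's Lemma~\ref{lem:coc}: the transverse factor $Y=Z$ of $\para{R}$ is itself cocompact (under the closure of the subgroup of isometries obtained as limits of elements of $G$ translating along directions asymptotic to $p$), and a cocompact unbounded proper \catz\ space contains a genuine geodesic line. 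You would need to supply this lemma (or an equivalent limiting argument pushing your half-plane deeper into the $\sigma$-direction and extracting a limit flat still containing a line asymptotic to $p$) to reach the stated conclusion.
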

 Under the hypothesis of the proposition we know that there exists a geodesic line $R$ asymptotic to $p$. The parallel set to $R$ splits as $\R\times Y$ for some \catz\ space $Y$. Then:
\begin{lemma}\label{lem:coc}
  $Y$ is cocompact.
\end{lemma}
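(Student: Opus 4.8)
The plan is to exploit the group $G$ fixing $p$ together with the splitting $\para{R} = \R \times Y$ of the parallel set of $R$. The key observation is that $\para{R}$ is itself a closed convex $G$-invariant subset of $X$: indeed every $g \in G$ fixes $p = R(\pm\infty)$ in one endpoint, and since $G$ acts by isometries, $g(R)$ is a line; I would first need to check that $g(R)$ is actually parallel to $R$, not merely asymptotic to $p$ at one end. This is exactly what the companion result of the paper (the hyperbolic-isometry dichotomy, or more directly the construction in Proposition~\ref{prop:veryhyp}) is designed to handle — but here I want a cleaner route. The honest approach is: if $G$ contains a hyperbolic element we are done by the ambient proposition, so assume $G$ consists only of parabolic and elliptic isometries fixing $p$. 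Under that assumption the Busemann character $\beta \colon G \to \R$ at $p$ is a homomorphism, and one shows $g(R)$ is parallel to $R$ for every $g$ (this is where the hypothesis that there is no flat with $p$ as boundary point would eventually be invoked in the enveloping proposition — but for Lemma~\ref{lem:coc} itself we may simply take as given that $\para{R}$ is $G$-invariant, which is immediate once $g(R) \parallel R$).

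**Main argument.** Granting that $\para{R} = \R \times Y$ is $G$-invariant, every $g \in G$ respects the splitting by uniqueness of the maximal Euclidean-line factor, so $g = (g_1, g_2)$ with $g_1$ an isometry of $\R$ (a translation, by $\beta(g)$) and $g_2$ an isometry of $Y$; write $\pi_Y \colon G \to \isom{Y}$ for the resulting homomorphism and $H = \pi_Y(G)$. I then want to show $H$ acts cocompactly on $Y$. Fix the compact set $K \subset X$ with $G \cdot K = X$. Pick $x_0 = R(0) \in \para{R}$ and let $q_0 \in Y$ be its $Y$-coordinate. For an arbitrary $y \in Y$, consider the point $(0, y) \in \para{R} \subset X$; there is $g \in G$ with $g^{-1}(0,y) \in K$, i.e. $(-\beta(g), g_2^{-1}(y))$ lies in $K$. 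Projecting $K \cap \para{R}$ (a closed set, but not obviously bounded in the $\R$-direction) — this is the subtle point: the $\R$-coordinate of $g^{-1}(0,y)$ is $-\beta(g)$, which need not be controlled. The fix is to compose $g$ with the translation $\tau$ along the $\R$-factor by $+\beta(g)$; but $\tau$ need not lie in $G$.

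**The real mechanism.** Instead I would argue as follows. Since $\para{R} = \R \times Y$ and $R = \R \times \{q_0\}$, the nearest-point projection $X \to \para{R}$ followed by the projection $\para{R} \to Y$ is $1$-Lipschitz; call the composite $P \colon X \to Y$. It is $G$-equivariant for the actions of $G$ on $X$ and of $H$ on $Y$ (because $G$ preserves $\para{R}$ and the splitting). Now $P(K)$ is a compact subset of $Y$, and for any $y \in Y$ pick the point $(0,y) \in X$ and $g \in G$ with $g^{-1} \cdot (0,y) \in K$; then $y = g_2 \cdot P(g^{-1}(0,y)) \in H \cdot P(K)$. Hence $H \cdot P(K) = Y$, so $Y$ is cocompact. (Note $P((0,y)) = y$ exactly because $(0,y)$ already lies in $\para{R}$ with $Y$-coordinate $y$, so no projection loss occurs on the relevant points.)

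**Main obstacle.** The genuinely delicate step is the very first one: verifying that $\para{R}$ is $G$-invariant, i.e. that $g(R)$ is parallel to $R$ and not merely asymptotic to $p$ at one endpoint — an isometry fixing only $p$ could a priori send $R(+\infty)$ somewhere far from $R(-\infty) = p$. Resolving this requires the dichotomy "$G$ has a hyperbolic element, or $g(R) \parallel R$ for all $g \in G$": if some $g(R)$ is not parallel to $R$, the convexity of $t \mapsto d(R(t), g(R))$ together with both lines being forward-asymptotic to $p$ forces this function to be bounded, hence the lines are parallel after all — so in fact $g(R) \parallel R$ always holds once $g$ fixes $p$, using that both $R$ and $g(R)$ are asymptotic to $p$ in the $-\infty$ direction and the flat strip theorem. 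Once that is pinned down the rest is the soft equivariant-projection argument above, and cocompactness of $Y$ follows with no curvature input beyond the flat strip decomposition.
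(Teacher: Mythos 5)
There is a genuine gap, and it sits exactly where you located the ``genuinely delicate step'': your argument that $g(R)$ is parallel to $R$ for every $g\in G$ fixing $p$ is false. The convex function $t\mapsto d(R(t),g(R))$ is bounded only on a half-line $(-\infty,t_0]$, because $R$ and $g(R)$ are asymptotic at the $p$-end; convexity does not upgrade this one-sided bound to a two-sided one, and the function may (and typically does) diverge as $t\to+\infty$. The hyperbolic plane $\Hyp^2$ with its full stabiliser of a boundary point $p$ (the $ax+b$ group, which acts cocompactly) is a concrete counterexample: two distinct geodesics asymptotic to $p$ at one end diverge at the other, so no nontrivial $g$ sends $R$ to a parallel of $R$. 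Consequently $\para{R}$ is not $G$-invariant in general, the homomorphism $\pi_Y\colon G\to\isom{Y}$ does not exist, and the equivariant-projection argument that follows has nothing to stand on. (In the $\Hyp^2$ example the lemma itself survives only because $Y$ is a point.)

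The paper avoids this entirely: it never claims that elements of $G$ preserve $\para{R}$. Instead, for each pair $a,b\in Y$ it manufactures one isometry $h$ that does preserve $\para{R}$ and moves $a$ to within $d$ of $b$, as follows. Take lines $\gamma_a,\gamma_b$ in $\para{R}$ asymptotic to $p$ with origins on a common horosphere, choose $h_k\in G$ with $d(h_k(\gamma_a(k)),\gamma_b(k))\le d$, and show via the Busemann-character estimates of Proposition~\ref{prop:veryhyp} that $h_k(\gamma_a(0))$ stays bounded; a limit $h$ of a subsequence then satisfies $d(\gamma_b(t),h(\gamma_a)(t'))\le d$ for \emph{all} $t$ (one passes to the limit in $k$ in the convexity inequality, which controls the whole line and not just a half-line), so $h(\gamma_a)$ is genuinely parallel to $\gamma_b$ and hence to $R$. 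Only such limit isometries respect the splitting $\R\times Y$, and they suffice to translate $Y$ cocompactly. If you want to salvage your write-up, you must replace the false global invariance claim by this per-pair limiting construction; the soft projection formalism at the end is then unnecessary.
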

\begin{proof}[Proof of the lemma]
  Let $d>0$ be such that the translates by $G$ of any ball of radius $d$ covers $X$. Let $a,b\in Y$ be arbitrary, and note $\varphi_a$ and $\varphi_b$ geodesic lines asymptotic to $p$ corresponding to  $a$ and $b$ respectively and whose origin are on the same horosphere based at $p$. For all $k\in\N$ let $h_k$ be an isometry of $X$ that sends $\gamma_a(k)$ within $d$ to $\gamma_b(k)$. The same type of comparisons we used to construct $g_n$ in the proof of the proposition \ref{prop:veryhyp} show that $h_k(\gamma_a(0))$ remains uniformly bounded, and hence the series of isometries $h_k$ has a convergence subsequence. Let $h$ be a limit of such a subsequence. Then $h$ is an isometry that sends $R$ to a parallel line that lies within $d$ of $\gamma_a$. Hence $h$ stabilizes \para{R}, respects its product decomposition $\R\times Y$, and sends $a$ in $Y$ to within $d$ of $b$. Since $b$ is arbitrary we just proved the cocompactness of $Y$.
\end{proof}
\begin{lemma}\label{lem:isomexist}
  There exists an isometry of $X$ preserving the parallel set of $R$ and with non zero Busemann character at $p$.
\end{lemma}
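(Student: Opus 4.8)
The plan is to read the required isometry off Proposition \ref{prop:veryhyp}, whose hypotheses are exactly those in force here (its proof constructs the line $R$ of its statement subject only to $R(-\infty)=p$, so we may take it to be our $R$). That proposition hands us a sequence $(g_n)_{n\in\N}$ in $G$ and a real $d\geq 0$ with: $g_n(R)$ parallel to $R$ and $\disthaus{g_n(R)}{R}\leq d$ for every $n$; and $g_n^{\pm}(q)\to R(\pm\infty)$ for every $q\in X$. I claim $h=g_n$ works for all sufficiently large $n$, and I would verify this in two steps.

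First I would show that each $g_n$ stabilises the parallel set $\para{R}$. As an isometry, $g_n$ maps the set of geodesic lines of $X$ parallel to $R$ bijectively onto the set of lines parallel to $g_n(R)$; since $\disthaus{g_n(R)}{R}<\infty$ and having finite Hausdorff distance is a transitive relation among lines, those two sets of lines coincide, so taking unions gives $g_n(\para{R})=\para{R}$. This little observation --- that an isometry moving $R$ to a parallel line must preserve the whole parallel set --- is really the only non-formal point in the argument.

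Then I would check that the Busemann character of $g_n$ at $p$ is nonzero once $n$ is large. Put $x=R(0)$ and $b=\buse{p}{x}{\cdot}$, so that $b(x)=0$, $b$ is $1$-lipschitz, and $b(R(s))=s$ for all $s\in\R$; the Busemann character of $g_n$ at $p$ is the constant $b-g_n^{*}b$, hence equals $\pm b(g_n(x))$ depending on the sign convention. Now $g_n(x)\in g_n(R)$ lies within $d$ of $R$, say within $d$ of $R(s_n)$, while $g_n(x)\to R(+\infty)$ in the cone topology; this forces $s_n\to+\infty$ (a sequence staying in a bounded neighbourhood of a fixed compact arc of $R$ cannot tend to $R(+\infty)$), and $1$-lipschitzness of $b$ gives $|b(g_n(x))-s_n|=|b(g_n(x))-b(R(s_n))|\leq d$, so $b(g_n(x))\geq s_n-d\to+\infty$. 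Equivalently, one may simply quote the inequality $\gamma_n\geq n-d$ obtained inside the proof of Proposition \ref{prop:veryhyp}. In either case $b(g_n(x))\neq 0$ for $n$ large, and then $h=g_n$ is an isometry of $X$ preserving $\para{R}$ with nonzero Busemann character at $p$. I do not anticipate any genuine obstacle beyond the parallel-set observation of the second paragraph and keeping the sign conventions straight.
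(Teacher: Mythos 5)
Your proof is correct and is in substance the paper's own argument: the paper re-runs the limit construction of Lemma \ref{lem:coc} with a translation of $2d$ along $R$ to force a positive Busemann character, whereas you obtain the same isometries by quoting Proposition \ref{prop:veryhyp} directly (whose proof indeed works for any line with $R(-\infty)=p$) and taking $n>d$, together with the correct observation that an isometry sending $R$ to a parallel line must preserve $\para{R}$. No gap.
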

\begin{proof}
  It is the same construction as in the previous lemma (with $a=b$ arbitrary) except we choose $h_k$ that sends $\gamma_a(k)$ to within $d$ of $\gamma_a(k+2d)$ to force the Busemann character to be positive.
\end{proof}
\begin{proof}[Proof of the proposition]
  Let $\R\times Y$ be the parallel set to $c$.  If $Y$ is bounded then it admits a circumcenter $c$. Let $h$ be an isometry given by the lemma \ref{lem:isomexist}. Then $h$ stabilises the line corresponding to $\R\times \{c\}$ in the decomposition of \para{R}. Since the Busemann character of $h$ is non trivial, $h$ acts as a non degenerates translation on this line (i.e. it the translation length is not zero). But then $h$ is hyperbolic.

 Suppose now that \para{R} is not bounded. Then it admits a geodesic line $L$ since it is cocompact by lemma \ref{lem:coc} and $L\times R$ is the desired flat.
\end{proof}
To simplify the proof of the next proposition, we begin with a definition and a lemma:
\begin{defi}
  Let $\theta$ and $\psi$ be two asymptotic rays in a \catz\ space. Then we define $\distinf{\theta}{\psi}$ by:
  \begin{equation*}
    \distinf{\theta}{\psi}=\lim_{t\rightarrow+\infty}d(\theta(t),\hat{\psi}(t)),
  \end{equation*}
where $\hat{\psi}$ is a reparametrization of $\psi$ such that $\psi(t)$ and $\theta(t)$ always lie on the same horosphere based at $\psi(+\infty)=\theta(+\infty)$ (hence it does not depend on the unit-speed parametrisation of $\theta$ and $\psi$, but might not depend solely on their asymptotic class).
\end{defi}
\begin{lemma}\label{lem:distinfdiv}
  Let $X$ be a cocompact \catz\ space. Suppose there exists for every $n\in\N$ two asymptotic rays $\theta_n$ and $\psi_n$ with $\distinf{\theta_n}{\psi_n}\geq n$. Then $X$ contains a flat.
\end{lemma}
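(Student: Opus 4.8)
The plan is to extract a flat plane by a limiting argument, using cocompactness to recenter the configurations so that they accumulate on an honest flat subspace. First I would fix $x_n$ to be the point on $\theta_n$ lying on the common horosphere with $\psi_n(0)$ (so that $\distinf{\theta_n}{\psi_n}$ is literally $\lim_t d(\theta_n(t),\psi_n(t))$ under the chosen parametrisations). Then $y_n := \psi_n(0)$ satisfies $d(x_n,y_n)\le\distinf{\theta_n}{\psi_n}$... wait, that is the wrong inequality; rather the function $t\mapsto d(\theta_n(t),\psi_n(t))$ is convex with limit $\distinf{\theta_n}{\psi_n}$ at $+\infty$, and at the common horosphere the two rays are \emph{closest}, so in fact I want instead to pick a point $z_n$ far out where $d(\theta_n(z_n),\psi_n(z_n))$ is already $\ge n/2$, say, and recenter there. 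Using cocompactness, I translate $z_n$ (and the whole data) back into a fixed compact set $K$ by isometries $g_n\in\isom{X}$, and pass to a subsequence so that $g_n(z_n)\to q$, the two incoming geodesic directions of $g_n(\theta_n)$ and $g_n(\psi_n)$ at $z_n$ converge (Arzelà--Ascoli, properness), and likewise the two outgoing directions converge. The limit of $g_n(\theta_n)$ is a geodesic line $\theta$ through $q$ (it is a line, not a ray, because the ``horospherical'' reparametrisation forces the segments on both sides of $z_n$ to have length $\to\infty$: the Busemann function evaluated at $z_n$ is $\to-\infty$ on the $\psi$-side by the divergence hypothesis, hence the ray extends arbitrarily far backward), and similarly $g_n(\psi_n)$ limits to a line $\psi$ through $q$, both asymptotic to the same boundary point $\xi$ at the common endpoint.

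Next I would argue that $\theta$ and $\psi$ are parallel lines at positive Hausdorff distance. They are asymptotic at $\xi$ by construction. At the other end, convexity of $t\mapsto d(g_n\theta_n(t),g_n\psi_n(t))$ together with the fact that this function is bounded below by the horospherical value but \emph{increases without bound} on one side shows, in the limit, that $d(\theta(t),\psi(t))$ stays bounded as $t\to+\infty$ on the $\xi$-side (the two functions agree near $\xi$, both going to the finite horospherical value... here I need to be careful: the flat strip theorem requires parallel, i.e. bounded Hausdorff distance on \emph{both} sides). The clean way: by the flat strip theorem applied to the asymptotic rays $\theta|_{[0,\infty)}$ and $\psi|_{[0,\infty)}$ toward $\xi$ — more precisely, since they are asymptotic rays in a complete $\catz$ space with distance bounded along the horospheres, they span a flat half-strip or strip; and the divergence hypothesis $\distinf{\theta_n}{\psi_n}\to\infty$ guarantees the width is strictly positive in the limit (if the limiting width were $0$ the two lines would coincide, but then for large $n$ the $\distinf{}{}$ values would be bounded, contradiction). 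So $\theta$ and $\psi$ are distinct parallel lines, hence by the flat strip theorem $\para{\theta}$ splits as $\R\times Z$ with $Z$ a Hadamard space of diameter $\ge$ the limiting width $>0$.

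Finally, to upgrade the strip to a \emph{flat plane} I would iterate: the configuration can be recentered anywhere along the strip, and more importantly I can run the whole argument inside $\para{\theta}=\R\times Z$, where cocompactness of $\para{\theta}$ (this follows from cocompactness of $X$ together with the same recentering trick as in Lemma \ref{lem:coc}, or from Proposition~1) forces $Z$ to be cocompact; if $Z$ were bounded the strips would have bounded width, contradicting the divergence of $\distinf{\theta_n}{\psi_n}$, so $Z$ is unbounded, hence (being cocompact and proper) contains a geodesic line $L$, and $\R\times L\subseteq\R\times Z$ is the desired flat plane. The main obstacle, and the step deserving the most care, is the passage to the limit producing an honest \emph{line} $\theta$ (rather than a ray) and a \emph{strictly positive} limiting width: both rest on correctly exploiting that the horospherical reparametrisation, combined with $\distinf{\theta_n}{\psi_n}\to\infty$, pushes the relevant Busemann values to $-\infty$ and keeps the rays from collapsing onto each other. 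Once the strip of positive width is in hand, the extraction of the flat is routine given Lemma~\ref{lem:coc}-type cocompactness and properness.
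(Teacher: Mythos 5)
The central limiting step of your argument fails. You recenter at a point $z_n$ on $\theta_n$ and take a \emph{single} limit over $n$, hoping to obtain two parallel lines $\theta$ and $\psi$ at positive \emph{finite} Hausdorff distance. But the function $t\mapsto d(\theta_n(t),\hat\psi_n(t))$ is convex and bounded (the rays are asymptotic and the horospherical reparametrisation is a constant shift), hence non-increasing; so $\distinf{\theta_n}{\psi_n}\geq n$ forces $d(\theta_n(t),\hat\psi_n(t))\geq n$ for \emph{every} $t$ --- your parenthetical worry is backwards: the rays are closest at infinity, not on the common horosphere --- and a short computation with the $1$-Lipschitz Busemann function even gives $d(\theta_n(t),\hat\psi_n(s))\geq\max(|t-s|,\,n-|t-s|)\geq n/2$, i.e.\ every point of $\theta_n$ is at distance at least $n/2$ from the whole ray $\psi_n$. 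Consequently, after recentering $z_n$ into a fixed compact set, the entire ray $g_n(\psi_n)$ lies at distance at least $n/2$ from $g_n(z_n)$ and escapes to infinity as $n\to\infty$: there is no limit ray $\psi$, no pair of parallel lines, and no strip. The divergence hypothesis does not ``keep the width positive'' in your limit; it makes the width infinite and destroys the configuration. The quantifiers must be arranged as a two-step limit: first, for each \emph{fixed} $n$, recenter along the rays (send $\theta_n(k)$ back to a fixed point and let $k\to\infty$); since the non-increasing convex function $d(\theta_n(\cdot),\hat\psi_n(\cdot))$ converges to its limit $\distinf{\theta_n}{\psi_n}$, the recentered pair converges to two asymptotic rays at \emph{constant} distance $\geq n$, which therefore bound a flat half-strip of width $\geq n$. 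Second, take a limit of these half-strips of unbounded width, recentered deep inside them, to produce the flat.

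Your concluding step is also unjustified even granting one strip of positive width. You invoke cocompactness of $\para{\theta}=\R\times Z$ ``as in Lemma~\ref{lem:coc}'', but that lemma relies on the whole group fixing the boundary point $p$, which is what guarantees that the limits of the recentering isometries carry the line $R$ to a parallel line and hence preserve $\para{R}$; for an arbitrary line $\theta$ in a cocompact \catz\ space the parallel set need not be cocompact and $Z$ may well be bounded (think of an isolated flat strip). Nor does ``otherwise the strips would have bounded width'' help: the half-strips of width $n$ furnished by the hypothesis live, a priori, in different parallel sets for different $n$, so no single $\para{\theta}$ is forced to be wide. The passage from half-strips of arbitrary width to a flat must again go through a recentering-and-limit argument rather than through the product structure of one parallel set.
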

\begin{proof}
  By cocompacity, it is enough to find flat half strips of arbitrary width. Take $n\in\N$ and the corresponding rays $\theta_n$ and $\psi_n$. For each $k\in\N$ let $h_{n,k}$ be an isometry that sends $\theta_n(k)$ to within a fixed distance of a fixed point. Up to passing to a subsequence we can assume that the rays $h_{n,k}(\theta_n)$ and $h_{n,k}(\psi_n)$ converge to asymptotic rays $\theta_{n}^\prime$ and $\psi_{n}^\prime$. In fact, the distance between $\theta_{n}^\prime$ and $\psi_{n}^\prime$ is constant equal to \distinf{\theta_n}{\psi_n}, so that they bound a half flat strip of width at least $n$ and we are done.
\end{proof}
Here comes the long expected proposition:
\begin{prop}
  Let $X$ be a cocompact \catz\ space. Then either it admits a hyperbolic isometry or it contains a flat.
\end{prop}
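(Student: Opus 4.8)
The plan is to assume $X$ has no hyperbolic isometry and produce a flat plane; we may assume $X$ is unbounded, the bounded case being trivial. Replacing $X$ by its canonical boundary minimal subspace $\can{X}$ is harmless: $\can{X}\subseteq X$, the group $G:=\isom{X}$ acts on $\can{X}$ properly (as $X$ is proper) and cocompactly, $\can{X}$ is still unbounded (it has full boundary), and no element of $G$ restricts to a hyperbolic isometry of $\can{X}$, since such a restriction has the same type as the original isometry. So assume from now on that $X$ is boundary minimal.

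First I would write $X=\R^{k}\times Y$ with $Y$ admitting no euclidean factor; this splitting is $G$-invariant, $G$ acts cocompactly on $Y$, and boundary minimality forces $Y$ to be either a point or unbounded. If $Y$ is unbounded and $k\ge 1$, choose a geodesic line $\ell$ in $Y$ (it exists, $Y$ being cocompact and unbounded); then $\R\times\ell$ is a flat plane and we are done. If $Y$ is a point then $X=\R^{k}$ with $k\ge 1$: for $k\ge 2$, $X$ contains a flat plane; for $k=1$, the action of $G$ on the line $X=\R$ has either a nontrivial translation, a hyperbolic isometry (excluded), or a fixed point, which forces $X$ bounded (a cocompact action fixing a point of the space makes the space bounded) --- also excluded, so $k=1$ cannot occur. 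This reduces us to $k=0$, i.e.\ $X$ has no euclidean factor.

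If $G$ fixes a point $p\in\bndry{X}$, I would apply the first proposition of this section (valid because $G$ acts properly and cocompactly): it yields either a hyperbolic element of $G$ --- excluded --- or a flat with $p$ on its boundary, and inspection of its proof shows this flat is a flat plane. So suppose $G$ fixes no point at infinity, and suppose for contradiction that $X$ contains no flat plane. Then, by the flat plane theorem for cocompact \catz\ spaces (see \cite{BridsonHaefliger201011}), $X$ is Gromov hyperbolic, so $\bndry{X}$ is its Gromov boundary, and it is non-empty since $X$ is unbounded. It cannot be a single point (that point would be $G$-fixed). If it is two points $p^{\pm}$, a finite index subgroup of $G$ fixes both, hence preserves the flat strip they span, on which the same alternative --- a nontrivial translation, or a fixed point --- gives a hyperbolic isometry of $X$ or a finite index subgroup fixing a point of $X$ (whence $X$ bounded), both impossible. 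If $\bndry{X}$ has at least three points, the cocompact action of $G$ has full limit set, so it is non-elementary and contains a loxodromic isometry, i.e.\ a hyperbolic isometry of $X$ --- impossible. Every branch is contradictory, so $X$ contains a flat plane.

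The real obstacle is the last branch, which relies on the flat plane theorem and on the classical fact that a non-elementary proper cocompact isometric action on a Gromov hyperbolic space contains loxodromic elements --- inputs outside the elementary \catz\ machinery of the earlier sections. An alternative keeping everything internal is to invoke the lemma \ref{lem:distinfdiv}: if $\distinf{\cdot}{\cdot}$ is unbounded over pairs of asymptotic rays one obtains a flat at once, and if it is uniformly bounded one first checks that $X$ then contains no flat half-plane before running the same boundary analysis.
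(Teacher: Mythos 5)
Your argument is essentially sound but takes a genuinely different route from the paper, and in doing so it inverts the logical order that the paper deliberately chose. The paper proves the proposition directly and elementarily: it runs Swenson's construction to get isometries $h_n$ with $\angle_{h_n(r)}(h_n^2(r),r)\to\pi$ and $d(r,h_n(r))\to\infty$, then splits into the elliptic and parabolic cases and, via isosceles (respectively ``infinite isosceles'') comparison triangles and upper semi-continuity of angles, manufactures flat half-strips of arbitrary width, hence a flat plane by cocompactness. You instead reduce to the case where $G$ fixes no point at infinity, assume no flat plane, invoke the Flat Plane Theorem to get Gromov hyperbolicity, and then appeal to the classification of cobounded actions on hyperbolic spaces to produce a loxodromic element. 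The remark following the corollary already observes that the proposition and the corollary are equivalent via Theorem A of \cite{bridson1995existence}; your proof is essentially ``prove the corollary by hyperbolic-group machinery, deduce the proposition,'' whereas the point of the paper's proof (and of Swenson's) is to stay inside basic \catz\ geometry. Your preliminary reductions (boundary minimality, the $\R^k\times Y$ splitting, the two-point-boundary case via the parallel set and a circumcenter) are correct but mostly superfluous for the paper's argument.

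Two steps in your last branch are asserted rather than proved and deserve attention. First, ``non-elementary cobounded action contains a loxodromic'' must be applied to the full isometry group, which is locally compact but need not be discrete; this is the Gromov classification of actions on hyperbolic spaces for arbitrary groups, a genuine theorem you are using as a black box. Second, and more importantly, ``loxodromic, i.e.\ a hyperbolic isometry of $X$'' conflates the coarse notion (positive stable translation length) with the \catz\ notion (semi-simple with an axis); these do not coincide in general \catz\ spaces, since parabolic isometries can have positive translation length. In your setting the implication does hold, but it needs an argument: the two fixed points at infinity of a loxodromic element are joined by a geodesic line (visibility), the union of all such lines is a closed convex invariant set splitting as $\R\times C$, hyperbolicity forces $C$ to be bounded (otherwise one finds a flat half-plane), and the isometry translates the line through the circumcenter of $C$. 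Without this paragraph the final contradiction is not established. With these two points supplied, your proof is complete, at the cost of importing the Flat Plane Theorem and the hyperbolic-space classification where the paper uses none of it.
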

\begin{cor}
  Let $X$ be a Gromov hyperbolic cocompact \catz\ space. Then it admits a hyperbolic isometry.
\end{cor}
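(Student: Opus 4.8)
The plan is to dichotomize on whether the full isometry group $G=\isom{X}$ fixes a point at infinity, mirroring the structure of Section~\ref{sec:geodcomp} but replacing the geodesic completeness hypothesis with the preceding existence results. First I would reduce to the case where $X$ is boundary minimal: passing to $\can{X}$, which is preserved by all isometries and is itself cocompact, one does not lose any hyperbolic isometry (a hyperbolic isometry of $\can{X}$ is hyperbolic in $X$), and a flat in $\can{X}$ is a flat in $X$. So assume $X=\can{X}$ and suppose $X$ admits no hyperbolic isometry; I want to produce a flat.

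Next I would split into two cases according to whether $\isom{X}$ fixes a boundary point $p$. If it does, then $G=\isom{X}$ acts cocompactly (by definition of $X$ being cocompact, using that $\isom{X}$ acts cocompactly on $\can{X}$) and fixes $p$, so the previous proposition applies directly: either $G$ has a hyperbolic element --- excluded by assumption --- or $X$ contains a flat with $p$ as a boundary point, and we are done. If $\isom{X}$ fixes no point at infinity, then I would invoke Theorem~\ref{thm:capmon}: since $X$ is cocompact and not a singleton (if it were a point it is trivially flat), and here we must supply geodesic completeness --- this is exactly the gap --- $X$ admits a hyperbolic isometry, contradicting our assumption. The cleaner route, avoiding Theorem~\ref{thm:capmon} entirely, is to argue directly: if $\isom{X}$ has no global fixed point at infinity but also no hyperbolic element, then every isometry is elliptic or parabolic; one then wants to show the existence of many pairs of asymptotic rays with $\distinf{\cdot}{\cdot}$ unbounded and apply Lemma~\ref{lem:distinfdiv} to conclude $X$ contains a flat. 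Concretely, take any $p\in\bndry{X}$ and the (possibly proper) subgroup $G_p$ fixing $p$; if $G_p$ acted cocompactly on $X$ we would be in the first case, so $G_p$ does not, and one leverages the failure of cocompactness together with the construction from Proposition~\ref{prop:veryhyp} to find, for each $n$, isometries $g_n$ pushing a ray asymptotic to $p$ a large Busemann-distance $\gamma_n\to\infty$ while keeping it within bounded Hausdorff distance of itself --- but the Busemann character being unbounded while displacement stays controlled along one direction forces (via the flat strip bounded by $R$ and $g_n(R)$ and the Pythagorean relation \eqref{eq:cond3foward}) the existence of pairs $\theta_n,\psi_n$ with $\distinf{\theta_n}{\psi_n}$ large, so Lemma~\ref{lem:distinfdiv} delivers a flat.

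The main obstacle is the second case: showing that failure of a global fixed point at infinity, combined with absence of hyperbolic isometries, actually produces the divergent $\distinf{\cdot}{\cdot}$-data needed for Lemma~\ref{lem:distinfdiv}, \emph{without} assuming geodesic completeness (so Theorem~\ref{thm:capmon} is not available as a black box). The delicate point is that if $\isom{X}$ fixes no point at infinity but every element is parabolic, the parabolic elements need not share a fixed point, and organizing their dynamics to extract flat half-strips of arbitrary width requires care --- one likely needs a minimality or Zorn-type argument on the nonempty closed invariant sets at infinity, or an appeal to the fact that a group of parabolics with no common fixed point on a Tits-compact boundary must, by the dynamics established in Lemma~\ref{lem:split} and Proposition~\ref{prop:fixflat}, move some ray with unbounded Busemann character relative to some fixed point, feeding Lemma~\ref{lem:distinfdiv}. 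I expect this to be the heart of the argument; the elliptic-only subcase is easy (a group of elliptic isometries of a complete \catz\ space with no global fixed point still has a canonical fixed point or fixed-at-infinity structure by the standard circumcenter argument, so it cannot occur on a boundary-minimal noncompact $X$).
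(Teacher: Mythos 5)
There is a genuine gap, and also a missed shortcut. The paper's own proof of this corollary is one line: it follows from the proposition immediately preceding it (a cocompact \catz\ space admits a hyperbolic isometry or contains a flat) together with the fact that a Gromov hyperbolic \catz\ space contains no flat plane --- a convex flat plane violates the thin-triangle condition, and this is the easy direction of Theorem A of \cite{bridson1995existence}, as the accompanying remark points out. Your proposal never invokes Gromov hyperbolicity anywhere, even though it is the only hypothesis distinguishing the corollary from the proposition; you say ``we are done'' upon producing a flat without ever stating why a flat is incompatible with the hypothesis.

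More seriously, instead of citing the dichotomy proposition you attempt to re-derive it, by splitting on whether $\isom{X}$ fixes a point of $\bndry{X}$, and the case where it does not (and no hyperbolic isometry exists) is left as an acknowledged ``main obstacle'' supported only by speculative remarks about extracting divergent $\distinf{\theta_n}{\psi_n}$ data from a fixed-point-free family of parabolics. That case is precisely where all the content lies. The paper avoids your case split entirely: it runs Swenson's argument (Theorem $11$ of \cite{swenson1999cut}) to produce a point $r$ and isometries $h_n$ with $\angle_{h_n(r)}(h_n^2(r),r)\to\pi$ and $d(r,h_n(r))\to\infty$, and then applies the elliptic/parabolic alternative to the \emph{individual} $h_n$ (a fixed point $x_n$, or a boundary point $p_n$ whose horospheres are preserved), building isosceles --- possibly infinite --- triangles whose base angles are forced to $\pi/2$ and hence flat half-strips of arbitrary width; Lemma \ref{lem:distinfdiv} and cocompactness then yield a flat. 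Note also that your closing claim that ``the elliptic-only subcase is easy'' via a circumcenter argument is unfounded: a group all of whose elements are elliptic need not have a bounded orbit, so no circumcenter is available, and Theorem \ref{thm:capmon} is indeed off the table without geodesic completeness. As written, your argument does not close.
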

\begin{rmk}
  The corollary and the proposition are in fact equivalent statement by the theorem A of \cite{bridson1995existence}.
\end{rmk}
\begin{proof}
\begin{figure}[h]
  \centering
  \begin{tikzpicture}

\draw (0.00,2.00) -- (0.00,-2.00);
\draw (0.00,2.00) -- (-2.35,1.24);
\draw (0.00,2.00) -- (2.35,1.24);
\draw (0.00,-2.00) -- (-2.35,1.24);
\draw (0.00,-2.00) -- (2.35,1.24);
\draw (-0.29,1.91) arc (-162.00:-90.00:0.30);
\draw (2.17,0.99) arc (-126.00:-198.00:0.30);
\draw (0.29,1.91) arc (-18.00:-90.00:0.30);
\draw (0.33,1.89) arc (-18.00:-90.00:0.35);
\draw (-2.07,1.33) arc (18.00:-54.00:0.30);
\draw (-2.02,1.34) arc (18.00:-54.00:0.35);
\draw[dotted,thick] (-0.86,1.72) arc (-162.00:-18.00:0.90);
\draw (-0.29,-1.60) arc (126.00:90.00:0.50);
\draw (-0.15,-1.53) circle (0.06);
\draw (0.00,-1.50) arc (90.00:54.00:0.50);
\draw (0.15,-1.53) circle (0.06);
\draw (2.35,1.24) node[anchor=south west] { $r$ };
\draw (0.00,2.00) node[anchor=south] { $h_n(r)$ };
\draw (-2.35,1.24) node[anchor=south east] { $h^{2}_n(r)$ };
\draw (0.00,-2.00) node[anchor=north] { $x_n$ };
\draw (-1.78,1.05) node { $\alpha_n$ };
\draw (-0.35,1.51) node { $\beta_n$ };
\draw (0.71,1.03) node { $\varphi_n$ };
\draw (-0.25,-1.24) node { $\gamma_n$ };
\end{tikzpicture}
  \caption{When $h_n$ is elliptic.}
  \label{fig:twotriangles}
\end{figure}
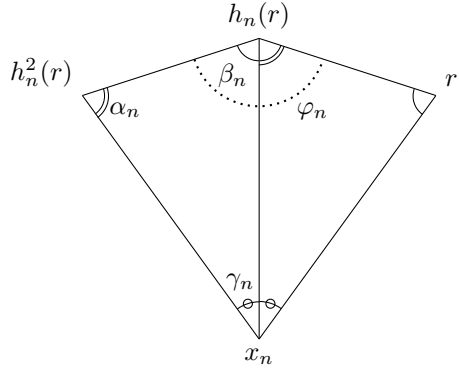
The beginning of the proof is the same as that of the theorem $11$ in \cite{swenson1999cut}, i.e. we get a point $r\in X$ and a family of isometries $h_n\in G$ such that the angle $\varphi=\angle_{h_n(r)}(h^{2}_n(r),r)$ tends to $\pi$ at infinity. We assume moreover that $d(r,h_n(r))$ diverges (we only have to choose the $r_j$ and $r_i$ in the proof of the theorem $11$ of \cite{swenson1999cut} such that $d(r_i,r_j)\rightarrow+\infty$). Suppose that $X$ admits no hyperbolic isometry. We shall distinguish between two cases. 

\textbf{An infinity of the $h_n$ are elliptic: }We can assume all the $h_n$ are elliptic and we fix a fixed point $x_n$ of $h_n$ for each $n$. Then the isosceles triangles $x_nh_n(r)h_{n}^2(r)$ and $x_nrh_n(r)$ are isometric and we label the angles according to the figure \ref{fig:twotriangles}. Now isosceles triangles in the euclidean space have acute base angles. By comparison, this must be the case in all \catz\ spaces, so that we have here $\alpha_n,\ \beta_n\leq\pi/2$. But then:
\begin{align*}
  \underbrace{\angle_{h_n(r)}(h^{2}_n(r),r)}_{\rightarrow\pi}\leq\underbrace{\alpha_n}_{\leq\pi/2}+\underbrace{\beta_n}_{\leq\pi/2},
\end{align*}
and so both $\alpha_n$ and $\beta_n$ must converge to $\pi/2$.

Let us pick $a>0$. Since $d(r,h_n(r))$ diverges, for sufficiently large $n$ we can take $s_n$ (resp $v_n$) on the geodesic joining $x_n$ and $h_n(r)$ (resp. on the geodesic joining $x_n$ and $r$) such that $d(s_n,v_n)=a$ and $d(s_n,x_n)=d(v_n,x_n)$. Let $\alpha_{n}^\prime=\angle_{s_{n}}(x_n,v_n)$ and $\beta_{n}^\prime=\angle_{v_n}(x_n,s_n)$. The item $(3)$ in the proposition $9.8$, Part \Roman{deux},  of \cite{BridsonHaefliger201011} shows that the sum  $\alpha_{n}^\prime+\beta_{n}^\prime$ is not lower that $\alpha_{n}+\beta_{n}$ and we  know that the latter tends to $\pi$. Since we always have $\alpha_{n}^\prime,\ \beta_{n}^\prime\leq\pi/2$ (the triangle $s_nx_nv_n$ is again isosceles), both $\alpha_{n}^\prime$ and $\beta_{n}^\prime$ must converge to $\pi/2$. But then by direct comparison with euclidean geometry, the isosceles triangle $s_nv_nx_n$  becomes arbitrary big. By cocompacity we can take a subsequence $n_k$ such that $s_{n}v_{n}x_{n}$ converges to some $svx$ where $s,v\in X$ and $x\in\bndry{X}$. By upper semi-continuity of the angle (see proposition $9.2$ Part \Roman{deux} in \cite{BridsonHaefliger201011}) we have $\angle_s(v,x)\geq\pi/2$ and $\angle_v(s,x)\geq\pi/2$. But the sum of those two angles cannot be bigger that $\pi$ so we have equality and $svx$ must in fact be a half flat strips of width $a$ as shown for example in \cite{BridsonHaefliger201011}, proposition $9.3$ Part \Roman{deux}. We just constructed half strips of arbitrary big width in $X$. It is then easy to construct a whole flat-plane by cocompacity.
\begin{figure}[h]
  \centering
  \begin{tikzpicture}

\clip (-3.05,-2.70) rectangle (3.05,2.70);
\draw (0.00,2.00) -- (0.00,-2.00);
\draw (0.00,2.00) -- (-2.35,1.24);
\draw (0.00,2.00) -- (2.35,1.24);
\draw (-2.35,1.24) .. controls (-0.50,0.00) .. (-0.50,-2.00);
\draw (2.35,1.24) .. controls (0.50,0.00) .. (0.50,-2.00);
\draw (-0.29,1.91) arc (-162.00:-90.00:0.30);
\draw[dotted,thick] (-0.86,1.72) arc (-162.00:-18.00:0.90);
\draw (0.29,1.91) arc (-18.00:-90.00:0.30);
\draw (0.33,1.89) arc (-18.00:-90.00:0.35);
\draw (0.33,1.89) arc (-18.00:-90.00:0.35);
\draw (0.00,-2.00) circle (4.00);
\draw (-2.07,1.33) arc (18.00:-33.73:0.30);
\draw (-2.02,1.34) arc (18.00:-33.73:0.35);
\draw (2.35,1.24) node[anchor=south west] { $r$ };
\draw (0.00,2.00) node[anchor=south] { $h_n(r)$ };
\draw (-2.65,1.29) node[anchor=south] { $h^{2}_n(r)$ };
\draw (-1.71,1.11) node { $\alpha_n$ };
\draw (-0.35,1.51) node { $\beta_n$ };
\draw (0.71,1.03) node { $\varphi_n$ };
\draw[->] (-0.60,-1.60) -- (-0.60,-2.00);
\draw (-0.60,-1.80) node[anchor=east] { $p_n$ };
\draw[->] (0.60,-1.60) -- (0.60,-2.00);
\draw (0.60,-1.80) node[anchor=west] { $p_n$ };
\end{tikzpicture}
  \caption{When $h_n$ is parabolic.}
  \label{fig:twoinftriangles}
\end{figure}
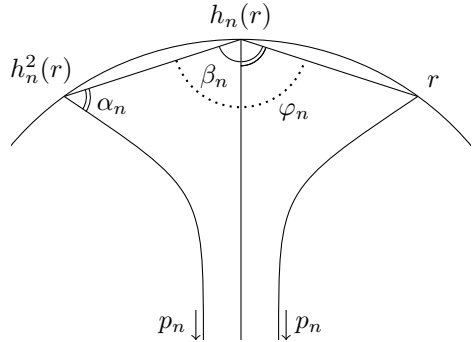

\textbf{An infinity of the $h_n$ are parabolic: }We can assume this time that all the $h_n$ are parabolic. It is shown in \cite{ballmann1985manifolds} that $h_n$ stabilises all the horospheres of some point $p_n$ on the boundary (this need not be the case of all its fixed points). We use the notations of the figure \ref{fig:twoinftriangles} and call $\theta_n$ (respectively $\psi_n$) the ray issuing from $h_n^{2}(r)$ (resp. $r$) and asymptotic to $p_n$. We can assume that the distances at infinity $\distinf{\theta_n}{\psi_n}$ are bounded since otherwise we can apply the lemma \ref{lem:distinfdiv} above. Let $d>0$ be such a bound. Let $a>d$ be chosen arbitrarily. From now on the construction is similar to the case where the $h_n$ are elliptic. Let $s_n$ (respectively $v_n$) be on the ray from $h(r)$ (resp. $r$)  to $p_n$ such that $d(s_n,v_n)=a$ and $s_n$ and $u_n$ both lie on the horosphere based at $p_n$ (so we have an ``infinite isosceles'' triangle). Let $\alpha_{n}^\prime=\angle_{s_{n}}(p_n,v_n)$ and $\beta_{n}^\prime=\angle_{v_n}(p_n,s_n)$. It is proved in the proof of the lemma $4.3$ of \cite{ballmann1985manifolds} (or in the proposition $9.8$ Part \Roman{deux} \cite{BridsonHaefliger201011}) that the sum  $\alpha_{n}^\prime+\beta_{n}^\prime$ is not lower that $\alpha_{n}+\beta_{n}$ and we  know that the latter tends to $\pi$. We still have $\alpha_{n}^\prime,\ \beta_{n}^\prime\leq\pi/2$ (to see this one can either take finite isosceles triangles that converge to $s_nv_np_n$ or use the fact that the horosphere based at $p_n$ through $s_n$ and $v_n$ is convex) and hence  $\alpha_{n}^\prime$ and $\beta_{n}^\prime$ both converge to $\pi/2$. Now by cocompacity we can assume that the triangle $s_nu_np_n$ converges to $sup$ where $s,u\in X$ and $p\in\bndry{X}$. By upper semi-continuity of the angle, $\angle_s(p,u)$ and $\angle_u(p,s)$ cannot be lower than $\pi/2$. Since there sum is lower than $\pi$ they must both be equal to $\pi/2$ and the triangle is in fact a half flat strip of width $a$ (again proposition $9.3$ Part \Roman{deux} of \cite{BridsonHaefliger201011}). We constructed flat strips of arbitrary big width: we conclude like in the previous case. 
\end{proof}
\bibliographystyle{plain}
\bibliography{biblio}
\end{document}